\crefname{appsec}{Appendix}{Appendices}
\theoremstyle{plain}
\newtheorem{thm}{Theorem}[section]
\newtheorem{lemma}[thm]{Lemma}
\newtheorem{cor}[thm]{Corollary}
\newtheorem{qn}[thm]{Question}
\theoremstyle{definition}
\newtheorem{defn}[thm]{Definition}
\newtheorem{obs}[thm]{Observation}
\newtheorem*{ass*}{Assumption}
\newtheorem*{notn}{Notation}
\newtheorem*{note*}{Note}
\theoremstyle{remark}
\newtheorem{rmk}[thm]{Remark}
\newtheorem{rem}[thm]{Remark}
\crefname{lem}{Lemma}{Lemmas}
\crefname{thm}{Theorem}{Theorems}
\crefname{defn}{Definition}{Definitions}
\crefname{fact}{Fact}{Facts}
\crefname{clm}{Claim}{Claims}
\crefname{prop}{Proposition}{Propositions}
\newcommand{\sd}{1.96}
\DeclareMathOperator{\Area}{Area}
\newcommand{\eps}{\varepsilon}
\newcommand{\Z}{\mathbb{Z}}
\newcommand{\NN}{\mathbb{N}}
\newcommand{\R}{\mathbb{R}}
\newcommand{\RR}{\mathbb{R}}
\newcommand{\TT}{\mathbb{T}}
\newcommand{\EE}{\mathbb{E}}
\newcommand{\mcF}{\mathcal{F}}
\newcommand{\mcG}{\mathcal{G}}
\newcommand{\mcI}{\mathcal{I}}
\newcommand{\ind}{\mathbf{1}}
\newcommand{\mc}{\mathcal}
\newcommand{\ol}{\overline}
\renewcommand{\Pr}{\mathbb{P}}
\renewcommand{\P}{\mathbb{P}}
\title{Clustering in typical unit-distance avoiding sets}
\author{
    Alex Cohen\footnote{MIT; alexcoh@mit.edu}\quad and Nitya Mani\footnote{MIT; nmani@mit.edu}
}
\begin{document}
\maketitle

\begin{abstract}
In the 1960s Moser asked how dense a subset of $\R^d$ can be if no pairs of points in the subset are exactly distance 1 apart.
There has been a long line of work showing upper bounds on this density. One curious feature of dense unit distance avoiding sets is that they appear to be ``clumpy,'' i.e. forbidding unit distances comes hand in hand with having more than the expected number distance $\approx 2$ pairs. 

In this work we rigorously establish this phenomenon in $\RR^2$. We show that dense unit distance avoiding sets have over-represented distance $\approx 2$ pairs, and that this clustering extends to typical unit distance avoiding sets. To do so, we build off of the linear programming approach used previously to prove upper bounds on the density of unit distance avoiding sets.
\end{abstract}

\section{Introduction}

In the early 1960s, Moser~\cite{CRO67} posed the following question: how dense can a subset $A \subset \RR^d$ be if $A$ contains no pair of points that are distance exactly $1$ apart, i.e. if $A$ is \textit{1-avoiding}? This problem is closely related to the Hadwiger-Nelson problem which asks to compute $\chi(\RR^d)$, the minimum cardinality of a coloring of $\RR^d$ so that points distance exactly $1$ apart receive different colors. 

The plane $(d = 2)$ has received much attention, with Erd\H{o}s~\cite{ER85} famously conjecturing in 1985 an upper bound of $1/4$. This conjecture was recently resolved in the affirmative by Ambrus, Csisz\'arik, Matolcsi, Varga, and Zs\'amboki~\cite{ACMVZ22}.

\begin{thm}[\cite{ACMVZ22}]
Any Lebesgue measurable $1$-avoiding planar set has upper density at most $0.2470,$ i.e. upper density strictly less than $1/4$.
\end{thm}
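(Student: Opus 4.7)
The plan is to invoke the Fourier-analytic / linear programming framework of Oliveira--Vallentin, specialized to the unit-distance graph in $\RR^2$, with a hand-tuned admissible function as in \cite{ACMVZ22}. First I pass from the $1$-avoiding set $A$ to its autocorrelation
\[
\rho(x) := \lim_{n\to\infty}\frac{1}{\Area(B_{R_n})}\int_{B_{R_n}} \one_A(y)\,\one_A(y+x)\,dy,
\]
extracted along a subsequence $R_n \to \infty$ realizing the upper density $\delta$. The function $\rho$ is continuous and positive definite, satisfies $\rho(0) = \delta$ and $0 \leq \rho \leq \delta$, and -- crucially -- vanishes identically on the unit circle $\{|x|=1\}$ since $A$ avoids unit distances.

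The LP bound then asserts: for any continuous radial $f \in L^1(\RR^2)$ with $\hat f \geq 0$ pointwise and $f(x) \leq 0$ on $\{|x|=1\}$, the density satisfies $\delta \leq f(0)/\hat f(0)$. The proof is Plancherel in spirit: decompose the positive Fourier--Stieltjes measure $\hat\rho$ into its atom of mass $\delta^2$ at the origin and the remaining positive part, pair $\rho$ against a mollification of $f$, and combine the sign conditions on $f$ and $\hat f$ with the vanishing of $\rho$ on the unit circle to extract the stated inequality. The theorem therefore reduces to exhibiting a single admissible $f$ with $f(0)/\hat f(0) \leq 0.2470$.

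To produce such an $f$, I parametrize it as a positive combination of ``nice'' radial positive-type atoms, for example indicator autoconvolutions $\one_{B_r} * \one_{B_r}$ of disks of various radii, or Bessel kernels $J_0(s|x|)$ integrated against positive measures in $s$. With this basis fixed, the problem becomes a finite-dimensional LP minimizing $f(0)/\hat f(0)$ subject to discretized versions of the two sign constraints, which can be solved numerically. The key innovation of \cite{ACMVZ22} is identifying a sufficiently rich basis -- together with auxiliary structural constraints coming from small finite configurations in the unit-distance graph -- to drive the optimum strictly below $1/4$.

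The hard part will be rigorous certification. Numerical LP solvers produce only an approximate witness; to conclude $\delta \leq 0.2470$ one must verify $f \leq 0$ on \emph{all} of $\{|x|=1\}$ and $\hat f \geq 0$ on \emph{all} of $\RR^2$, not merely on finite grids. This calls for a combination of interval arithmetic, explicit tail bounds on the Bessel expansions appearing in $\hat f$, and careful symbolic manipulation; controlling these error terms while keeping the certified bound below $1/4$ is where the bulk of the technical work in \cite{ACMVZ22} lies.
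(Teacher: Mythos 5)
The paper does not prove this theorem: it cites it from \cite{ACMVZ22} as background. So the right comparison is against the actual method of \cite{ACMVZ22} (and against the analogous LP machinery the present paper deploys in \S\ref{s:lp} to prove \cref{t:main}).

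Your high-level sketch is the correct framework: pass to the autocorrelation function, use positive-definiteness and vanishing on the unit circle, and bound the density by a linear programming duality argument with a numerically certified witness. The basic LP bound you write down (continuous radial $f\in L^1$, $\hat f \ge 0$, $f\le 0$ on $\{|x|=1\}$, gives $\delta \le f(0)/\hat f(0)$) is the Oliveira--Vallentin theorem and is stated correctly. You also rightly flag the certification issue, since the open sign constraint lives on an unbounded domain.

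The gap is in the sentence ``The theorem therefore reduces to exhibiting a single admissible $f$ with $f(0)/\hat f(0)\le 0.2470$.'' That reduction is false: the infimum of $f(0)/\hat f(0)$ over admissible $f$ in the sense you defined is bounded well above $1/4$ (it sits near $0.27$), so no choice of basis, however rich, can push that particular LP below $0.247$. The extra ``structural constraints from small configurations'' that you mention in passing are not a device for searching a larger family of the same witnesses --- they are additional inequalities added to the \emph{primal} LP (the constraints (G) and (CT) in \S\ref{s:lp} of this paper), coming from finite subgraphs of the unit-distance graph and from the constrained-triangle configurations of \cite{AM22}. Adding those primal constraints introduces new dual variables, and the resulting dual certificate is no longer a function $f$ with just the two sign conditions you listed; it is a combination like the $W(t)$ in~\eqref{eq:witness}, with nonnegativity imposed on the Fourier side ($W(0)\ge 1$, $W(t)\ge 0$ for $t\ge 0$) and coefficients multiplying the subgraph/triangle terms. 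Without spelling out this change in the LP and its dual, your argument stalls at a bound strictly worse than $1/4$, so the chain of reasoning as written does not reach $0.2470$. A secondary, smaller point: for radial $f$, the condition ``$f\le 0$ on all of $\{|x|=1\}$'' is a single scalar inequality and needs no interval-arithmetic work; the genuine certification burden is entirely on the global nonnegativity of the Bessel-type witness.
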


With respect to the $\ell_1$ or $\ell_{\infty}$ norms, $\frac14$ is a tight upper bound for $1$-avoiding sets, as shown in~\cite{KMOFR16}. The curvature of the $\ell_2$ norm forces $1$-avoiding sets to be smaller (see~\cref{r:curvature} for further discussion). Prior to~\cite{ACMVZ22}, one of the strongest pieces of evidence in favor of the Erd\H{o}s conjecture was the work of~\cite{KMOFR16}, who showed that any 1-avoiding set in $\RR^d$ (for $d \ge 2$) that displays \textit{block structure} has density strictly less than $2^{-d}$. A $1$-avoiding set has block structure if the set is a union of blocks where the distance between any pair of points from the same block is less than 1 and the distance between two points from different blocks is greater than $1$. 

 The best known constructions of dense $1$-avoiding sets have block structure. A simple, relatively large $1$-avoiding set can be constructed by placing open circular discs of radius $1/2$ at the lattice points of a regular hexagonal lattice generated by two length $2$ vectors with angle $\pi/3$. The union of these open circular discs is a $1$-avoiding set that has block structure and density $\pi/(8\sqrt{3}) \approx 0.2267$. This construction was strengthened by Croft~\cite{CRO67} as follows: we place a series of \textit{tortoises} (intersections of an open disc of radius $1/2$ with height $x < 1$ open regular hexagons) on a hexagonal lattice generated by two length $1 + x$ vectors with angle $\pi/3$. The maximum density of the union of these tortoises is $\approx 0.22936$, which occurs for $x \approx 0.96553$. This $1$-avoiding set has block structure and is the densest known construction of a $1$-avoiding set in $\RR^2$ (see~\cref{fig:croft}).

\begin{figure}
\centering
\begin{subfigure}{.4\linewidth}
  \hspace{-10pt}
  \includegraphics[width=1\linewidth]{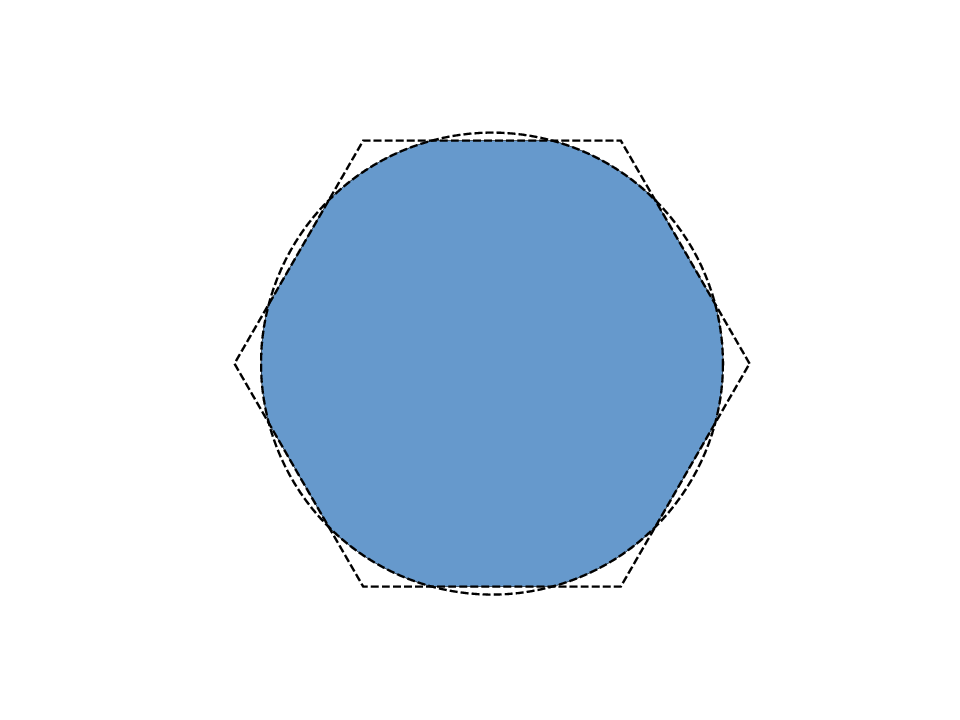}
\end{subfigure}
\begin{subfigure}{.4\linewidth}
  \hspace{-50pt}
  \includegraphics[width=1\linewidth]{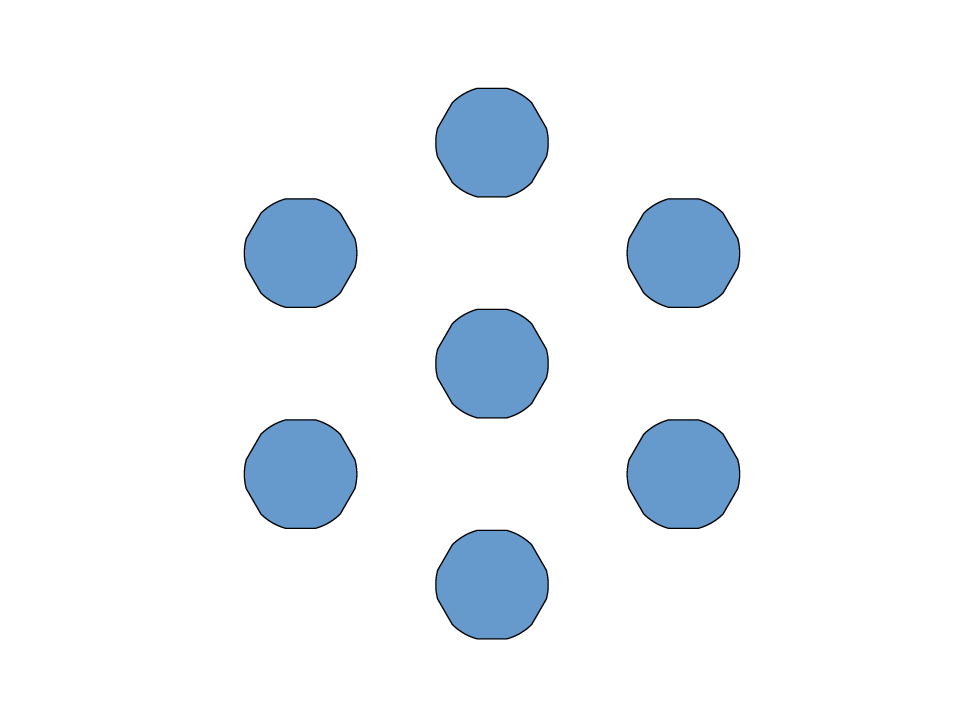}
\end{subfigure}
\caption{Croft's tortoise construction~\cite{CRO67,ACMVZ22}}
\label{fig:croft}
\end{figure}

This clustering is not an artifact of carefully crafted $1$-avoiding sets, but rather a naturally occurring phenomenon. 

As further evidence, we look to the arguments used to establish \textit{upper bounds} on the density of a $1$-avoiding set in $\RR^2$ (including~\cite{SZE84,FV10,KMOFR16,AM22,ACMVZ22}). To describe this approach at a high level, we make a few notions from above precise.

\begin{defn}
For a measurable subset $A \subset \RR^d$, the \textit{upper density} of $A$ is defined as $$\overline{\delta(A)} := \lim \sup_{K \to \infty} \frac{\lambda_d(A \cap [-K, K]^d)}{\lambda_d([-K, K]^d},$$ where $\lambda_d$ is the $d$-dimensional Lebesgue measure (we can define the \textit{lower density} analogously). If the limit of the above quantity exists, we call it the \textit{density} of $A$, denoted $\delta (A)$.

We let $m_1(\RR^d)$ denote the extremal bound on the density of $1$-avoiding sets, i.e. 
$$m_1(\RR^d) = \sup\left\{ \overline{\delta(A)} \mid A \subset \RR^d \text{ is 1-avoiding and measurable}  \right\}.$$
\end{defn}
Much of the recent work improving upper bounds on $m_1(\RR^2)$ has come from a linear programming approach (this procedure and its recent applications are described in more detail in~\cref{s:lp}). At a high level, the LP approach proves upper bounds on $m_1(\RR^2)$ by studying the \textit{autocorrelation function} $f(x) := \delta(A \cap (A - x))$ of a $1$-avoiding set $A$.
By averaging $f$ over the unit circle and normalizing, one gets a radialized function $f^{\circ}(r) := f^{\circ}(r; A)$. Intuitively, $f^{\circ}(r)$ is the \textit{pair correlation function}, it describes the density of distance $r$ pairs in the set $A$. The maximum value is $f^{\circ}(0) = \delta(A)$, and the typical value for a random set is $f^{\circ}(r) \sim \delta(A)^2$. Another way to say that $A$ is $1$-avoiding is to compute that $f^{\circ}(1) = 0$. We can describe $f^{\circ}$ by its Fourier coefficients $\{\kappa(t)\}_{t \in \RR_+}$. Because $f^{\circ}$ is the pair correlation function of a set, the coefficients satisfy several linear constraints. We upper bound $m_1(\RR^2)$ by solving a linear program: maximize $\delta(A)$ subject to linear constraints on the Fourier coefficients $\{\kappa(t)\}_{t \in \RR_+}$. By solving a discretized version of the dual LP, we can \textit{certify} an upper bound on $m_1(\RR^2)$. Solutions to the primal problem give candidate pair correlation functions. Solutions to the dual problem witness constraints on the density coming from constraints on the pair correlation function.

The pair correlation function $f^{\circ}(r; A)$ is an important object in upper-bound proofs, but is also interesting to study in its own right. See~\cref{fig:pair_cor_graph} for the pair correlation function of the hexagonal disk packing (we denote the set corresponding to this packing $A_{\mathrm{disk}}$).
\begin{figure}
    \centering
    \includegraphics[width=0.6\linewidth]{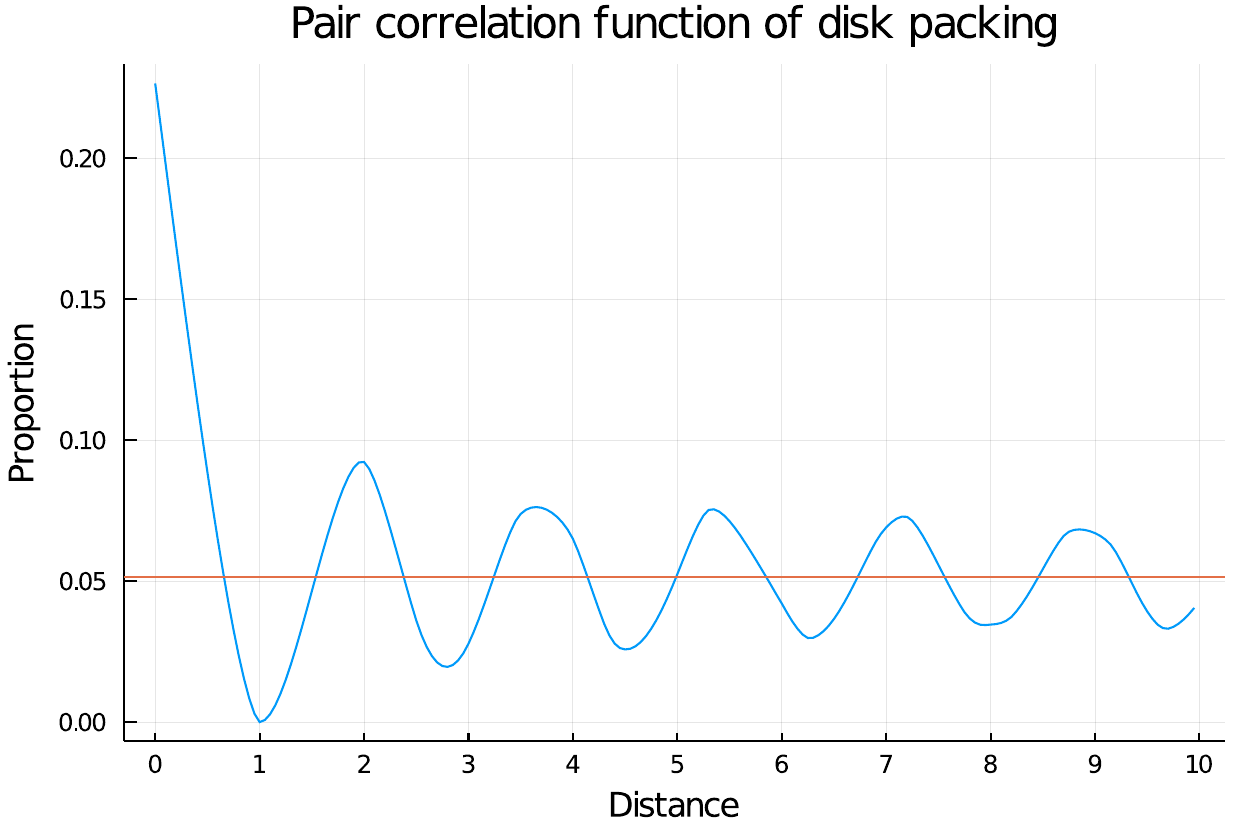}
    \caption{Pair correlation function of the unit distance avoiding set obtained by packing radius $\frac{1}{2}$ disks at distance $1$ from each other. The orange line is the typical pair correlation for a generic set of the same density.}
    \label{fig:pair_cor_graph}
\end{figure}
Let's start by making a few observations about properties of $f^{\circ}$ for a generic set $A$ and the set $A_{\mathrm{disk}}$:
\begin{itemize}
    \item For any $1$-avoiding set $A$ we have $f^{\circ}(1; A) = 0$.
    \item  $f^{\circ}(0; A) = \delta(A)$, the density of the set $A$.
    \item For a randomly generated set of density $\delta$, we expect $f^{\circ}(r) \sim \delta^2$ for most $r$. On the other hand, notice $f^{\circ}(r; A_{\mathrm{disk}})$ is close to $\delta(A)$ for $r$ near zero. This indicates that the set $A_{\mathrm{disk}}$ is ``clumpy'', there are lots of nearby pairs. 
    \item We have $f^{\circ}(2; A_{\mathrm{disk}}) \sim 0.09$, which is more than the typical value $\delta^2 \sim 0.05$. The reason is that $A_{\mathrm{disk}}$ is built out of blocks that have distance $\sim 2$ from each other, so there are extra distance 2 pairs in the whole set. 
\end{itemize} 
In this work we rigorously establish the overrepresentation of distance $\approx 2$ pairs for dense $1$-avoiding sets. Let 
\begin{equation}
    s(r; A) = \frac{f^{\circ}(r; A)}{\delta(A)^2},
\end{equation}
so that for a uniformly random set $A$ we expect $s(r; A) \sim 1$.
\begin{thm}\label{t:main} 
There exists $\gamma > 0$ such that the following holds. Suppose $A \subset \RR^2$ is a periodic set with $s(1; A) \leq \gamma$ and $\delta(A) \geq m_1(\RR^2) - \gamma$. Then 
\begin{equation*}
    s(\sd; A) \geq 1+\gamma. 
\end{equation*}
\end{thm}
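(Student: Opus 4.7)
The plan is to prove the theorem via an augmented version of the linear programming bound on $m_1(\RR^2)$ described in~\cref{s:lp}. Recall that the pair correlation $f^{\circ}(\cdot; A)$ of a periodic measurable set $A$ is a positive definite radial function with $f^{\circ}(0;A) = \delta(A)$, $0 \leq f^{\circ}(r;A) \leq \delta(A)$, Hankel transform $\kappa(\cdot;A) \geq 0$, and, when $A$ is 1-avoiding, $f^{\circ}(1;A) = 0$. Maximizing $\delta$ subject to these (linearized) constraints gives the LP whose optimum upper-bounds $m_1(\RR^2)$.

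I introduce the augmented LP obtained by replacing the constraint $f^{\circ}(1;A) = 0$ with $f^{\circ}(1;A) \leq \gamma_1 \delta^2$ and adding the extra constraint $f^{\circ}(\sd;A) \leq (1 + \gamma_2) \delta^2$; treating $\delta$ as a parameter these constraints are linear in $f^{\circ}$. Denote its optimum by $M(\gamma_1, \gamma_2)$. The theorem reduces to showing $M(\gamma, \gamma) \leq m_1(\RR^2) - \gamma$ for all sufficiently small $\gamma > 0$: the pair correlation of a set $A$ satisfying $s(1;A) \leq \gamma$ and $s(\sd;A) \leq 1 + \gamma$ is feasible for this augmented LP, so $\delta(A) \leq M(\gamma, \gamma) \leq m_1(\RR^2) - \gamma$, contradicting the hypothesis $\delta(A) \geq m_1(\RR^2) - \gamma$.

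The heart of the proof is therefore the strict inequality $M(0, 0) < m_1(\RR^2)$: adding $s(\sd;A) \leq 1$ to the LP for exactly 1-avoiding sets strictly decreases its optimum. I will establish this by perturbing a near-optimal dual certificate $g_0$ for the unperturbed LP (an explicit radial test function, in the style of the works cited in~\cref{s:lp}) to $g = g_0 + \eta h$, where the radial function $h$ has Hankel transform nonnegative on the support of $\widehat{g_0}$, carries extra negative weight near $r = \sd$, and vanishes near $r = 1$. For small $\eta > 0$ this preserves dual feasibility while improving the upper bound, yielding $M(0,0) \leq m_1(\RR^2) - c$ for some $c > 0$. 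Lipschitz continuity of $M$ near $(0,0)$, standard by LP sensitivity analysis, then gives $M(\gamma, \gamma) \leq m_1(\RR^2) - c + L\gamma$ for some constant $L$, so taking $\gamma \leq c/(L+1)$ gives $M(\gamma,\gamma) \leq m_1(\RR^2) - \gamma$ as required.

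The main obstacle is the construction of the perturbation $h$ and the verification of the strict improvement $M(0, 0) < m_1(\RR^2)$. As with existing upper-bound proofs such as~\cite{ACMVZ22, AM22, KMOFR16}, this is expected to demand a numerical verification on a suitable discretization of the dual LP, paired with a careful analysis of the dual multiplier at the new constraint. The choice $\sd = 1.96$ matches the lattice spacing $1 + x \approx 1.96$ in Croft's optimal tortoise construction, which is precisely the scale at which any near-extremal LP solution should exhibit over-clustering; this makes $r = \sd$ a natural and likely unique location where the perturbation argument can be pushed through quantitatively.
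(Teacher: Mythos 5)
Your high-level skeleton is correct and matches the paper: both treat $\delta$ as a parameter so that $s(1;A)\le\gamma$ and $s(\sd;A)\le 1+\gamma$ become linear constraints on $\kappa$, both argue by contraposition via LP duality, and both defer to a numerical verification of the dual certificate. However, the mechanism you propose for the key strict inequality is different from the paper's and, as stated, would not go through quantitatively. You propose to start from a near-optimal dual certificate $g_0$ for the unconstrained LP (which certifies $m_1(\RR^2)\le 0.2470$ per~\cite{ACMVZ22}) and add a small perturbation $\eta h$ supported near $r=\sd$. But the target is to push the new bound below a \emph{lower} bound on $m_1(\RR^2)$ — specifically below Croft's density $\approx 0.22936$ — and the paper achieves $\delta^*\le 0.229$. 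That is a drop of roughly $0.018$, not a first-order perturbation: indeed, the paper's certified coefficients have $v_1\approx 9.0$ and $v_{\sd}\approx 1.97$, i.e., the dual multipliers on the two new constraints are not small. So ``small $\eta>0$ preserving feasibility'' cannot produce the improvement; one has to re-solve the full LP from scratch with the new constraints $(A1)$, $(A2)$ alongside the graph constraints $(G)$ and $(CT)$, which is what the paper does.

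A second, related gap: you write that it suffices to establish $M(0,0) < m_1(\RR^2)$ and invoke LP sensitivity, but $m_1(\RR^2)$ is not known. The only way to make this comparison is to beat an explicit \emph{lower} bound on $m_1(\RR^2)$ (Croft's tortoise), which the paper does explicitly and which your sketch leaves implicit. Your Lipschitz-in-$\gamma$ step is fine and corresponds to the paper's explicit $C\gamma\cdot[\text{sum of coefficients}]$ slack in Lemma~4.2, but it needs the room created by $0.229 < 0.22936$ to land the conclusion $\delta(A)\le m_1(\RR^2)-\gamma$. Once you replace the perturbation step with ``solve the augmented LP (including the graph constraints) and verify the witness function $W(t)\ge 0$ numerically, then compare to Croft,'' your outline becomes the paper's proof.
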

By periodic, we mean periodic with respect to the lattice $K\Z^2$ for some $K > 0$. 
It is technically convenient to assume $A$ is periodic because the density and pair correlation function are well defined. In practice any measurable subset of $\R^2$ can be approximated by periodic subsets, so we don't lose generality.

In the above result, we use the value \sd\, instead of 2 for technical reasons, but it represents the same phenomenon.~\cref{t:main} says that a dense $1$-avoiding set has overrepresented distance \sd\, pairs. The proof gives an explicit numerical value of $\gamma$. We prove the theorem under the potentially weaker hypothesis $\delta(A) \geq \delta(A_{\text{Croft}}) - \gamma$.

We then use the graph container method to show that typical unit distance avoiding sets (which have density $\sim\frac12 m_1(\RR^2)$) also have overrepresented distance $\sd$ pairs. 
To make typicality precise, we introduce a little more notation.
We say a subset of $\R^2$ is \textit{locally constant at scale $\frac{1}{N}$} if it is a union of $\frac{1}{N}\times\frac{1}{N}$ grid squares $[j/N, (j+1)/N] \times [k/N, (k+1)/N]$. We say it is $K$-periodic if it is periodic with respect to $K\Z^2$. There are finitely many sets that are both $\frac{1}{N}$-constant and $K$-periodic. 

\begin{thm}\label{t:main-container} 
Let $\gamma$ be the constant from \cref{t:main}. The following holds for all $K > K_0$ and all $N > N_0(K)$. Among all $\frac{1}{N}$-constant and $K$-periodic sets that are $1$-avoiding, pick a set $A$ uniformly at random. With high probability, we have that
\begin{align*}
    s(\sd; A) \geq 1+\gamma/2. 
\end{align*}
\end{thm}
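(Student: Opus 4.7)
The plan is to apply the graph container method. Encode $\frac{1}{N}$-constant, $K$-periodic subsets of $\R^2$ as subsets of the finite vertex set $V := (\Z/KN\Z)^2$ indexing grid squares, and define the graph $G = G_{K,N}$ on $V$ by placing an edge between two grid squares whenever they contain a pair of points at (toroidal) distance exactly $1$; then $1$-avoiding sets correspond exactly to independent sets of $G$. A supersaturation-style container lemma (e.g.\ Saxton--Thomason) then gives, for any small $\epsilon > 0$ and all sufficiently large $K, N$, a family $\mcF$ of containers $C \subseteq V$ with $|\mcF| \le 2^{\epsilon|V|}$ such that every independent set lies in some $C$, each $|C|/|V| \le m_1(\R^2) + \epsilon$, and each induced subgraph $G[C]$ has at most $\epsilon|V|$ edges. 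The supersaturation input --- that any $S \subseteq V$ with $|S|/|V| \ge m_1(\R^2) + \epsilon'$ contains $\Omega(\epsilon'|V|)$ distance-$1$ pairs --- follows because otherwise, greedily removing an endpoint of each such pair would leave a genuine $1$-avoiding set of density $> m_1(\R^2)$, contradicting the LP upper bound.

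Split $\mcF = \mcF_{\mathrm{dense}} \sqcup \mcF_{\mathrm{sparse}}$ according to whether $|C|/|V| \ge m_1(\R^2) - \gamma/2$ or not. For $C \in \mcF_{\mathrm{dense}}$, I will invoke~\cref{t:main} at the container level: greedily delete one endpoint of each edge of $G[C]$ to obtain a truly $1$-avoiding set $C' \subseteq C$ with $|C \setminus C'| \le \epsilon|V|$. Viewed as a $K$-periodic measurable subset of $\R^2$, $C'$ satisfies $s(1;C') = 0$ and $\delta(C') \ge m_1(\R^2) - \gamma$ (as long as $\epsilon \le \gamma/2$), so~\cref{t:main} yields $s(\sd; C') \ge 1 + \gamma$. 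Since $C$ and $C'$ agree on a $(1 - \epsilon)$-fraction of $V$, the values of $\delta(\cdot)$ and $f^\circ(\sd;\cdot)$ at $C$ and $C'$ differ by $O(\epsilon)$, so $s(\sd; C) \ge 1 + \gamma - O(\epsilon)$.

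To conclude, I will show that a uniformly random independent subset of a dense container inherits the high $s(\sd)$ value with overwhelming probability. Fix $C \in \mcF_{\mathrm{dense}}$ and let $A \subseteq C$ be uniformly random (each vertex included independently with probability $\tfrac{1}{2}$); both $\delta(A)$ and $f^\circ(\sd; A)$ are Lipschitz functions of the vertex-inclusion indicators with per-coordinate variations of order $1/|V|$, so McDiarmid's inequality gives $\delta(A) = \delta(C)/2 + o(1)$ and $f^\circ(\sd; A) = f^\circ(\sd; C)/4 + o(1)$ outside a set of probability $2^{-\Omega(|V|)}$. Dividing, $s(\sd; A) \ge 1 + \gamma/2$ with probability $\ge 1 - 2^{-\Omega(|V|)}$, and since $G[C]$ has at most $\epsilon|V|$ edges, all but a $o(1)$ fraction of such $A$ are already independent in $G$, so the same concentration applies to the uniform distribution on independent subsets of $C$. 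Hence the number of bad $1$-avoiding sets coming from $\mcF_{\mathrm{dense}}$ is at most $|\mcF| \cdot 2^{(m_1(\R^2)+\epsilon)|V|} \cdot 2^{-\Omega(|V|)}$ and those from $\mcF_{\mathrm{sparse}}$ at most $|\mcF| \cdot 2^{(m_1(\R^2) - \gamma/2)|V|}$; both are $o(1)$ times the total $1$-avoiding count, which is bounded below by $2^{(m_1(\R^2) - o(1))|V|}$ via subsets of a discretized near-extremal configuration such as the Croft tortoise packing. The main technical obstacle is setting up the supersaturation-style container lemma in a form compatible with the sparse distance-$1$ graph and the $\frac{1}{N}$-discretization; given that, the cleaning step, the application of~\cref{t:main}, and the concentration calculation are all essentially routine.
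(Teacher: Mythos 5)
Your overall architecture—container lemma plus supersaturation plus \cref{t:main} plus concentration plus union bound against $|\mcI(G)|$—matches the paper's proof closely. However, there is a genuine and critical gap in your supersaturation step, and a couple of related quantitative miscalibrations downstream.

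The gap is in the sentence claiming that any $S \subseteq V$ with $|S|/|V| \ge m_1(\R^2)+\epsilon'$ contains $\Omega(\epsilon'|V|)$ distance-$1$ pairs ``because otherwise, greedily removing an endpoint of each such pair would leave a genuine $1$-avoiding set of density $> m_1(\R^2)$.'' This greedy-deletion argument gives only $\Omega(\epsilon'|V|)$ edges, i.e.\ constant average degree $\Omega(\epsilon')$ in $G[S]$. That is far too weak: the relevant graph $\mcG(N,K)$ on $|V| = N^2K^2$ vertices has maximum (and typical) degree $\Theta(N)$, and the Kleitman--Winston step with threshold degree $D$ produces $\binom{|V|}{\le |V|/D}$ containers per iteration. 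With $D = O(1)$ this is $2^{\Theta(|V|)}$ containers, which swamps the entropy gap $\exp(-\Omega(\gamma^2|V|))$ coming from concentration and destroys the final union bound. To get $2^{o(|V|)}$ containers you need supersaturation at the scale $e(G[S]) \gtrsim N \cdot |V|$, i.e.\ $s(1;S) \ge \gamma(\epsilon') > 0$ as $N \to \infty$ — a statement that cannot be proved by vertex deletion alone. The paper obtains this strong form (\cref{l:supersat}) from a compactness argument due to Bourgain and Bukh: weak $L^2$ limits of indicator functions preserve the pair correlation at distance $1$ because $J_0 = \widehat{\mu_{S^1}}$ decays at infinity, which is exactly the curvature of the $\ell^2$ circle (cf.\ \cref{r:curvature}, where the argument is shown to fail for $\ell^\infty$). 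Your proposal does not supply this ingredient and cannot substitute the greedy-deletion counting for it.

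Two smaller issues follow from the same miscalibration. First, the container edge bound you assert, $e(G[C]) \le \epsilon|V|$, is off by a factor of $N$: the iteration in \cref{l:cont-main} stops when $s(1;C) \le \epsilon$, which corresponds to $e(G[C]) \lesssim \epsilon N |V|$, not $\epsilon|V|$. Consequently your ``cleaning'' step (delete one endpoint per edge to get a truly $1$-avoiding $C'$ with $|C\setminus C'| \le \epsilon|V|$) does not go through as stated, and your claim that ``all but a $o(1)$ fraction of such $A$ are already independent in $G$'' is false — a uniformly random subset of $C$ avoids all $\Theta(\epsilon N|V|)$ edges with probability $2^{-\Theta(\epsilon N |V|)}$, which is exponentially small. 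Both of these are avoidable: the paper applies \cref{t:main} directly to the container $F$ using the relaxed hypothesis $s(1;F) \le \gamma$ (which is exactly why \cref{t:main} is stated with $s(1;A) \le \gamma$ rather than $s(1;A)=0$), and then counts bad independent sets by simply bounding $|\{A \subset F : \text{bad}\}| \le 2^{|F|}\exp(-\Omega(\gamma^2|V|))$ and comparing against $|\mcI(G)| \ge 2^{(m_1 - o(1))|V|}$, never needing the fraction of independent subsets within a container. So your cleaning step and the ``$1-o(1)$ fraction independent'' claim should both be replaced by this more direct count. But the supersaturation gap is the substantive missing idea.
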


The main ingredient in the container method is a supersaturation principle for the unit distance graph on $\R^2$, originally due to Bourgain and further developed by \cite{BUKH08}. Roughly speaking, if a subset $A$ of $\RR^2$ has density $\delta(A) \ge m_1(\RR^2) + \eps$, then $A$ has at least a $\gamma(\eps) > 0$ portion of distance $1$ pairs. The supersaturation principle comes from \textit{compactness} of the functional 
\begin{equation*}
    A \mapsto f^{\circ}(1; A),
\end{equation*}
which in turn uses the curvature of the $\ell^2$ norm. 

\subsection*{Outline}

In \S\ref{s:prelim} we review some preliminary results that will be used in the rest of the paper, including the supersaturation and compactness properties of unit distance avoiding sets. In \S\ref{s:containers} we prove Theorem \ref{t:main-container} (overrepresentation of distance \sd\, pairs for typical $1$-avoiding sets) using Theorem \ref{t:main} (overrepresentation for dense $1$-avoiding sets). The main idea of the container method is that sampling a uniformly random $1$-avoiding set looks like first sampling a $1$-avoiding set with near maximum density, and then sampling a uniformly random subset. In \S\ref{s:lp} we prove Theorem \ref{t:main} using a linear program. Finally, in \S\ref{s:more_obs} we discuss more structural observations and questions about $1$-avoiding sets. 

\subsection*{Acknowledgements}
Alex Cohen and Nitya Mani were supported by a Hertz Graduate Fellowship and the NSF Graduate Research Fellowship Program. Thanks to Henry Cohn for interesting conversations. 

\section{Preliminaries}\label{s:prelim}

\subsection{Notation}
We use the inequality notation $A\lesssim B$ to mean there is a universal constant $C$ so that $A \leq CB$.

Fix a large integer $K > 0$ and let $\TT_K^2 = \R^2 / (K \Z^2)$ be a $K\times K$ torus. We denote by $\widehat {\TT_K^2} = \frac{2\pi}{K}\Z^2$ the Fourier dual. We work with the normalized probability measure on $\TT_K^2$ and the counting measure on $\widehat{\TT_K^2}$. For positive integer $L$ we let $[L] := \{1, 2, \ldots, L\}.$

For $f, g \in L^2(\TT_K^2)$ we denote the inner product by
\begin{align*}
    \langle f, g\rangle = \frac{1}{K^2} \int_{\TT_K^2} f(x)\overline g(x)\, dx,
\end{align*}
and we use the Fourier transform
\begin{align*}
    \widehat f(\xi) = \langle f, e^{i \xi \cdot x}\rangle\qquad \text{for $\xi \in \widehat{\TT_K^2}$.}
\end{align*}
We have a Fourier inversion formula
\begin{align*}
    f(x) = \sum_{\xi \in \widehat{\TT_K^2}} \widehat f(\xi) e^{i\xi \cdot x}
\end{align*}
and Plancherel formula
\begin{align*}
    \langle f, g\rangle = \sum_{\xi \in \widehat{\TT_K^2}} f(\xi) \widehat g(\xi).
\end{align*}
Let $A \subset \TT_K^2$ be a set with density $\delta(A) = \frac{1}{K^2} \Area(A)$. The autocorrelation function of $A$ is 
\begin{equation}
    f(x) = \delta(A\cap (A-x)) = \langle \ind_A, \ind_{A-x}\rangle,\quad x \in \TT_K^2,
\end{equation}
and the pair correlation function is 
\begin{equation}
    f^{\circ}(r) = \fint_0^{2\pi} f(rv_{\theta})\, d\theta,\quad v_{\theta} = (\cos \theta, \sin \theta),
\end{equation}
here $\fint_0^{2\pi} = \frac{1}{2\pi} \int_0^{2\pi}$. We interpret $f^{\circ}(r)$ as the density of pairs at distance $r$, and for a randomly chosen set we expect $f^{\circ}(r) \approx \delta(A)^2$. We define $$s(r; A) := \frac{f^{\circ}(r)}{\delta(A)^2},$$ so that $s(r)\approx 1$ for a typical set. Instead of considering sets that literally avoid unit distances we will consider sets with $s(1; A) = 0$, meaning the set of unit distances in $A$ have measure $0$. By a slight abuse of notation, we will also refer to such sets as $1$-avoiding.

Let 
\begin{equation}\label{eq:besselJ0}
    J_0(r) = \fint_0^{2\pi} e^{ir\cos \theta}\, d\theta
\end{equation}
be the Bessel function of the first kind with parameter $0$. If $\mu_{S^1}$ is the normalized measure on a unit circle, then $\widehat \mu_{S^1}(\xi) = J_0(|\xi|)$. We have the Fourier expansion
\begin{equation}\label{eq:fourier_synth_fcirc}
    f^{\circ}(r) = \sum_{\xi \in \widehat{\TT_K^2}} J_0(r|\xi|) |\widehat \ind_A(\xi)|^2.
\end{equation}
Split up $\TT_K^2$ into a grid of $\frac{1}{N}\times \frac{1}{N}$ squares $\left\{Q_j\right\}_{j=1}^{(NK)^2}$. Say $A$ is locally constant at scale $\frac{1}{N}$ if it is a union of closed grid squares,
\begin{equation*}
    A = \bigcup_{j \in [S]} Q_j, \quad S \subset [(NK)^2]
\end{equation*}
In the rest of the paper we take $K$ to be a fixed large constant and let $N$ go to infinity. We denote 
\begin{align*}
    {n\choose \leq m} = {n\choose 0} + {n\choose 1} + \dots + {n\choose m}. 
\end{align*}

\subsection{Approximation}

\begin{defn}
Let $m_{1}(\TT_K^2)$ be the maximum density of a unit distance avoiding set in $\TT_K^2$.
Let $m_{1,N}(\TT_K^2)$ be the maximum density of a unit distance avoiding set in $\TT_K^2$ that is locally constant at scale $\frac{1}{N}$.
\end{defn}

\begin{lemma}\label{lem:approximation}
We have $m_{1,N}(\TT_K^2) \to m_{1}(\TT_K^2)$ as $N \to \infty$. 
\end{lemma}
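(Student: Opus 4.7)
The inequality $m_{1,N}(\TT_K^2) \le m_1(\TT_K^2)$ is immediate since every locally constant $1$-avoiding set is in particular a $1$-avoiding set. For the reverse direction the plan is: starting from any $1$-avoiding $A$ with $\delta(A) \ge m_1(\TT_K^2) - \eps$, I will construct a locally constant $1$-avoiding set $B^N$ at scale $1/N$ with $\delta(B^N) \ge \delta(A) - O(\eps)$, via a two-step erosion-and-round procedure.

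First, for small $\rho > 0$ and $\eta \in (0,1/4)$, I introduce the closed ``eroded'' set
\[
A_{\rho,\eta} := \bigl\{ x \in \TT_K^2 : \Area(A \cap B(x,\rho)) \ge (1-\eta)\pi\rho^2 \bigr\},
\]
which is the superlevel set of a continuous function of $x$. By the Lebesgue density theorem, a.e.\ $x \in A$ is a density-$1$ point of $A$ and hence lies in $A_{\rho,\eta}$ for all sufficiently small $\rho$, so dominated convergence gives $\delta(A_{\rho,\eta}) \to \delta(A)$ as $\rho \to 0$. Crucially, $A_{\rho,\eta}$ remains $1$-avoiding: if a positive-measure set $\Theta$ of angles satisfied $\Area(A_{\rho,\eta} \cap (A_{\rho,\eta} - v_\theta)) > 0$, then for each $\theta \in \Theta$, picking $x$ with $x, x+v_\theta \in A_{\rho,\eta}$ and applying a union bound over $z \in B(x,\rho)$ would yield $\Area(A \cap (A - v_\theta)) \ge (1-2\eta)\pi\rho^2 > 0$. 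Since this holds on a positive-measure set of $\theta$, it contradicts $f^\circ(1;A) = 0$.

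Next, I fix $\rho$ small enough that $\delta(A_{\rho,\eta}) \ge \delta(A) - \eps$ and choose $N$ with $NK \in \Z$ and $1/N$ sufficiently small compared to $\eta\rho$. The rounded set is
\[
B^N := \bigcup \bigl\{ Q_j : Q_j \cap A_{\rho,\eta} \ne \emptyset \bigr\}.
\]
Since $B^N \supset A_{\rho,\eta}$, we have $\delta(B^N) \ge \delta(A_{\rho,\eta})$. For any $y \in B^N$ there is $x \in A_{\rho,\eta}$ with $|y-x| \le \sqrt2/N$, and the symmetric difference $B(y,\rho) \triangle B(x,\rho)$ has area $O(\rho/N)$, so $\Area(A \cap B(y,\rho)) \ge (1-2\eta)\pi\rho^2$ once $1/N$ is small enough. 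Hence $B^N \subset A_{\rho,2\eta}$, which is $1$-avoiding by the same argument with $2\eta < 1/2$. This yields $m_{1,N}(\TT_K^2) \ge \delta(B^N) \ge m_1(\TT_K^2) - 2\eps$ for all sufficiently large $N$.

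The main technical obstacle is verifying the $1$-avoidance of $A_{\rho,\eta}$. This step crucially uses the ``a.e.\ $\theta$'' form of $1$-avoiding: a positive-measure set of bad directions for $A_{\rho,\eta}$ would propagate, via the union bound over $z$, to a positive-measure set of bad directions for $A$ itself, contradicting $f^\circ(1;A) = 0$.
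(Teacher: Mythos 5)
Your proof is correct, but it takes a genuinely different route from the paper's. The paper projects $\ind_A$ onto the subspace of $1/N$-locally-constant functions, uses the $L^2$ error to bound the number of ``bad'' grid squares, selects the nearly-full squares $\mathcal{Q}_s$, and then shrinks each selected square by a factor $(1-\beta)$; the shrinking is what provides the safety margin so that a unit-distance pair between two shrunken squares forces a positive-measure collection of unit-distance pairs in $A$, and it also changes the local-constancy scale to $\frac{1}{2NM}$. You instead erode $A$ at a continuous scale $\rho$ to get $A_{\rho,\eta}$ (points of local density $\geq 1-\eta$), establish $s(1; A_{\rho,\eta}) = 0$ by the same propagation idea, and round \emph{up} to grid squares; the rounded set $B^N$ sits inside the coarser erosion $A_{\rho,2\eta}$, so no separate shrinking step is needed and $B^N$ is locally constant at the original scale $\frac1N$. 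Both proofs rest on the same core observation---a unit-distance pair in the modified set propagates to a positive-measure set of unit-distance pairs in $A$, contradicting $f^\circ(1;A)=0$, using only the a.e.-$\theta$ form of $1$-avoiding---so the difference is mostly technical: your grid-free intermediate object $A_{\rho,\eta}$ is conceptually clean and avoids the nested-grid bookkeeping, at the cost of invoking the Lebesgue density theorem in place of the more elementary $L^2$ projection estimate.
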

\begin{proof}
Let $A \subset \TT_K^2$ and suppose $s(1; A) = 0$. For any $\varepsilon > 0$ we will find some $N > 0$ and a set $B \subset \TT_K^2$ which is locally constant at scale $N$, avoids unit distances, and has $\delta(B) \geq \delta(A) - \varepsilon$.

Let $\Pi_N: L^2(\TT_K^2) \to L^2(\TT_K^2)$ be the projection onto the linear subspace of functions that are constant on $\frac{1}{N}\times \frac{1}{N}$ boxes. Explicitly,
\begin{align*}
    \Pi_N f(x) = \fint_{Q_x} f(y)\, dy
\end{align*}
where $Q_x$ is the $\frac{1}{N}\times \frac{1}{N}$ grid square containing $x$ and $\fint_{Q_x}$ is the average value. For any $f \in L^2(\TT_K^2)$ we have $\Pi_N f \to f$ in $L^2$ as $N \to \infty$. 
Let $N > 0$ be large enough that $\| \Pi_N \ind_A - \ind_A\|_2 < \varepsilon$, where $\varepsilon > 0$ is a parameter to be chosen later. Let
\begin{align*}
    \delta_Q(A) &= \fint_Q \ind_A\, dx. 
\end{align*}
We have
\begin{align*}
    \| \Pi_N \ind_A - \ind_A\|_2^2 &= \sum_Q  \frac{1}{N^2}(\delta_Q(A) - \delta_Q(A)^2)
\end{align*}
For any $\alpha > 0$,
\begin{equation}\label{eq:num_Q_del_dif}
    \#\left\{Q\, :\, \delta_Q(A) - \delta_Q(A)^2 > \alpha\right\} < \frac{N^2 \varepsilon^2}{\alpha}. 
\end{equation}
Let $s > 0$ be a parameter to be chosen later. 
Let
\begin{align*}
    \mc Q_s = \Bigl\{Q\text{ a $\frac{1}{N}$-grid square}\, :\, \delta_Q(A) > 1-s\Bigr\}.
\end{align*}
By \eqref{eq:num_Q_del_dif}, 
\begin{align*}
    \delta(A) \leq \frac{|\mc Q_s|}{(NK)^2} + \frac{\varepsilon^2}{\alpha K^2} + \frac{\alpha}{s}.
\end{align*}
Define
\begin{align*}
    B = \bigcup_{Q \in \mc Q_s} (1-\beta) \cdot Q,
\end{align*}
where $\beta$ is a small number to be chosen later and $(1-\beta) \cdot Q$ is a central dilation.
We show $B$ is unit distance avoiding. Indeed, let $Q_1$, $Q_2$ be two grid squares such that $\delta_{Q_1}(A) > 1-s$ and $\delta_{Q_2}(A) > 1-s$, and $(1-\beta) \cdot Q_1$, $(1-\beta) \cdot Q_2$ have a unit distance between them. As long as $s$ is small enough relative to $\beta$, we find that the density of unit distances in $A$ between $Q_1$ and $Q_2$ is positive. 

For any $c > 0$, we can choose $\varepsilon, s, \alpha, \beta$ small enough that $\delta(B) > \delta(A) - c$. If we choose $\beta = 1/M$ for some integer $M$, the set $B$ is locally constant at scale $\frac{1}{2NM}$. 
\end{proof}

\begin{lemma}\label{lem:periodic_approx}
We have $m_1(\TT_K^2) \to m_1$ as $K\to \infty$.
\end{lemma}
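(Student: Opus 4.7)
My plan is to prove $\lim_{K \to \infty} m_1(\TT_K^2) = m_1$ by establishing matching upper and lower inequalities. The upper bound $m_1(\TT_K^2) \le m_1$ holds for every $K$ by a lifting argument: any $1$-avoiding $A \subset \TT_K^2$ lifts to a $K$-periodic set $\tilde A \subset \R^2$, and any Euclidean unit-distance pair in $\tilde A$ reduces modulo $K\Z^2$ to a unit-displacement pair in $A$, contradicting $s(1; A) = 0$. Hence $\tilde A$ is $1$-avoiding in the plane, and being periodic it has density $\delta(\tilde A) = \delta(A)$; therefore $\delta(A) \le m_1$.

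The content of the lemma is the matching lower bound. Fix $\varepsilon > 0$ and choose a measurable $1$-avoiding $A \subset \R^2$ with $\overline{\delta(A)} \ge m_1 - \varepsilon$. By the definition of upper density (and a translation of $A$), there exist arbitrarily large $L$ with $\lambda_2(A \cap [1, L+1]^2)/L^2 \ge m_1 - 2\varepsilon$. I set $K = L + 2$ and let $B \subset \TT_K^2$ be the image of $A \cap [1, L+1]^2$ under the identification $\TT_K^2 = [0,K]^2/\sim$, so that $B$ sits inside the fundamental domain with a width-$1$ buffer strip on each side.

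The only non-routine point is verifying that $B$ is $1$-avoiding on the torus. Within a single fundamental domain the distances in $B$ are inherited from $A$, so are never equal to $1$. Across fundamental domains, the buffer forces any pair of points in distinct translates of the content to lie at Euclidean distance at least $2$, since the shortest nonzero vector in $K\Z^2$ has length $K = L + 2$ while $B$ is confined to a coordinate range of width $L$. The density of $B$ on $\TT_K^2$ is at least $(m_1 - 2\varepsilon) L^2/(L+2)^2 \ge m_1 - 3\varepsilon$ for $L$ large, producing $m_1(\TT_K^2) \ge m_1 - 3\varepsilon$ along an unbounded sequence of $K$.

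To upgrade this to all sufficiently large $K$ (rather than just a subsequence), I reapply the buffer construction to a near-extremal $1$-avoiding set on a fixed large torus. Given $K_1$ with $m_1(\TT_{K_1}^2) \ge m_1 - \varepsilon/2$, I lift to a $K_1$-periodic $\tilde A_1 \subset \R^2$, and for any $K_2 \gg K_1$ embed $\tilde A_1 \cap [1, K_2 - 1]^2$ into $\TT_{K_2}^2$ with a width-$1$ buffer. The resulting density differs from $\delta(\tilde A_1)$ by $O(K_1/K_2)$, combining the $O(1/K_2)$ loss from the buffer with the $O(K_1/K_2)$ sampling error from restricting a $K_1$-periodic set to a box of side $K_2$. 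Hence $m_1(\TT_{K_2}^2) \ge m_1 - \varepsilon$ for all $K_2 \ge C(\varepsilon) K_1$, which combined with the upper bound yields the lemma. The only point that requires care throughout is the buffer estimate, and it is essentially the same idea already used in \cref{lem:approximation}.
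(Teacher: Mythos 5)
Your proposal is correct and matches the paper's proof in its essential idea: restrict a dense $1$-avoiding set in $\R^2$ to a slightly smaller square inside a $K\times K$ fundamental domain, so that a buffer strip prevents spurious unit distances from appearing when the square is wrapped onto the torus $\TT_K^2$. You are somewhat more explicit than the paper about two routine points---the easy inequality $m_1(\TT_K^2)\le m_1$ via periodic extension, and the upgrade from an unbounded sequence of $K$ to all sufficiently large $K$ (the paper absorbs this into the claim that for every large $K$ one can locate a $K\times K$ window of near-maximal density)---but the core buffer argument is the same.
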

\begin{proof}
Let $A \subset \R^2$ be a unit distance avoiding set. For any $\varepsilon > 0$, for large enough $K$ we can find a $K\times K$ square $S$ so that 
\begin{equation*}
    \delta(A\cap S) = \frac{1}{K^2} \cdot \text{Area}(A\cap S) \geq \overline{\delta(A)} - \varepsilon.
\end{equation*} 
Now set $\widetilde A \subset \TT_K^2$ to be $A\cap \widetilde S$, where $\widetilde S$ is a smaller square of side length $K-1$ centered inside $S$. We have $\widetilde A \subset \TT_K^2$ is unit distance avoiding and
\begin{align*}
    \delta(\widetilde A) \geq \overline{\delta(A)} - \frac{4}{K}.
\end{align*} 
Thus, by tiling $\RR^2$ by copies of $\widetilde{A}$, we see that we can find periodic unit distance avoiding sets with density tending to $\overline{\delta(A)}$. 
\end{proof}

\subsection{Compactness and supersaturation}
Our intuition is that dense unit distance avoiding sets are a union of large scale blocks. Unfortunately, we don't know how to prove such a strong structural statement. Bourgain~\cite{BOU86}, and later Bukh~\cite{BUKH08}, proved a compactness property for unit distance avoiding sets which gives some weaker structural information. The compactness principle shows that maximum unit distance avoiding sets have some uniform regularity---for instance, they cannot look like white noise on small scales. The important consequence for us is supersaturation in the unit distance graph. We include a self-contained proof of compactness and supersaturation for the reader's convenience. 

Let $\Phi \in L^2(\TT_K^2)$ be a function, $f$ its autocorrelation function, and $f^{\circ}$ the radialized autocorrelation function. The case we care about is $\Phi = \ind_A$ for some set $A$.
Recall the Fourier expansion \eqref{eq:fourier_synth_fcirc}
\begin{align*}
    f^{\circ}(r) &= \sum_{\xi \in \widehat{\TT_K^2}} J_0(r|\xi|)\, |\widehat \Phi(\xi)|^2.
\end{align*}
By the stationary phase estimate, $J_0(|\xi|) \lesssim |\xi|^{-1/2}$.
\begin{lemma}\label{l:conv}
Suppose $\Phi_j \in L^2(\TT_K^2)$ is a uniformly bounded sequence which weakly converges in $L^2$ to $\Phi \in L^2(\TT_K^2)$. Let $f_j^{\circ}$ be the radialized autocorrelation function of $\Phi_j$ and $f^{\circ}$ the radialized autocorrelation function of $\Phi$. We have $f_j^{\circ}(r) \to f^{\circ}(r)$ for every $r$. 
\end{lemma}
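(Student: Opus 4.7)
The plan is to combine the Fourier expansion
\begin{equation*}
    f_j^{\circ}(r) = \sum_{\xi \in \widehat{\TT_K^2}} J_0(r|\xi|)\,|\widehat{\Phi_j}(\xi)|^2
\end{equation*}
with the Bessel decay $|J_0(r|\xi|)| \lesssim (r|\xi|)^{-1/2}$ and a standard low/high-frequency split. The key observations are that (i) weak $L^2$ convergence implies pointwise convergence of Fourier coefficients, (ii) the Fourier support $\widehat{\TT_K^2} = \frac{2\pi}{K}\Z^2$ is discrete so that any finite-radius ball contains only finitely many frequencies, and (iii) the Bessel decay, paired with the uniform $L^2$ bound on $\Phi_j$, makes the high-frequency tail arbitrarily small.

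First I would record that weak convergence $\Phi_j \rightharpoonup \Phi$ in $L^2(\TT_K^2)$ gives, for each fixed $\xi$,
\begin{equation*}
    \widehat{\Phi_j}(\xi) = \langle \Phi_j, e^{i\xi \cdot x}\rangle \longrightarrow \langle \Phi, e^{i\xi \cdot x}\rangle = \widehat{\Phi}(\xi),
\end{equation*}
since $e^{i\xi\cdot x} \in L^2(\TT_K^2)$. In particular $|\widehat{\Phi_j}(\xi)|^2 \to |\widehat{\Phi}(\xi)|^2$. Uniform boundedness in $L^2$ also yields $\sum_\xi |\widehat{\Phi_j}(\xi)|^2 = \|\Phi_j\|_2^2 \leq M$ with $M$ independent of $j$, and by lower semicontinuity the same bound applies to $\Phi$.

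Now fix $r > 0$ and $\varepsilon > 0$. For a cutoff $R > 0$, write
\begin{equation*}
    f_j^\circ(r) = \sum_{|\xi| \leq R} J_0(r|\xi|)\,|\widehat{\Phi_j}(\xi)|^2 + \sum_{|\xi| > R} J_0(r|\xi|)\,|\widehat{\Phi_j}(\xi)|^2.
\end{equation*}
The tail is bounded, using $|J_0(r|\xi|)| \lesssim (r|\xi|)^{-1/2}$ and Plancherel, by
\begin{equation*}
    \Bigl|\sum_{|\xi| > R} J_0(r|\xi|)\,|\widehat{\Phi_j}(\xi)|^2\Bigr| \lesssim (rR)^{-1/2} M,
\end{equation*}
and the same estimate holds with $\Phi$ in place of $\Phi_j$. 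Choosing $R = R(r, \varepsilon, M)$ large enough makes both tails smaller than $\varepsilon/3$. The remaining low-frequency piece $\sum_{|\xi| \leq R} J_0(r|\xi|)\,|\widehat{\Phi_j}(\xi)|^2$ is a finite sum over the discrete set $\widehat{\TT_K^2} \cap \overline{B(0,R)}$, so the pointwise convergence of each $|\widehat{\Phi_j}(\xi)|^2$ gives
\begin{equation*}
    \sum_{|\xi| \leq R} J_0(r|\xi|)\,|\widehat{\Phi_j}(\xi)|^2 \longrightarrow \sum_{|\xi| \leq R} J_0(r|\xi|)\,|\widehat{\Phi}(\xi)|^2
\end{equation*}
as $j \to \infty$. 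Adding the three estimates yields $|f_j^\circ(r) - f^\circ(r)| < \varepsilon$ for $j$ large.

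The only delicate point is that the argument requires $r > 0$: at $r=0$ the Bessel function gives no decay and the statement $f_j^\circ(0) = \|\Phi_j\|_2^2 \to \|\Phi\|_2^2$ generally fails under merely weak convergence. For $r>0$, the stationary-phase decay of $J_0$ is exactly what is needed, and the rest is a routine low/high-frequency diagonalization, with no further obstacles since the torus makes ``low frequency'' a genuinely finite set.
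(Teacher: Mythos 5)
Your proof is correct and follows essentially the same route as the paper: expand $f_j^\circ(r)$ in the Bessel-Fourier series, split into low and high frequencies, use weak convergence to handle the finite low-frequency sum and the decay of $J_0$ plus the uniform $L^2$ bound to handle the tail. The paper bounds the tail by $(\|\Phi_j\|_2^2 + \|\Phi\|_2^2)\sup_{|\xi|>M}|J_0(r|\xi|)|$ while you invoke the quantitative $(r|\xi|)^{-1/2}$ decay, but these are the same estimate. Your closing remark about $r=0$ is a fair and correct observation that the paper glosses over: the lemma is stated ``for every $r$,'' yet both the paper's argument and yours require $r>0$ (at $r=0$ the sup over high frequencies is identically $1$), and indeed weak convergence does not preserve $\|\cdot\|_2^2$. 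This does not affect the paper's use of the lemma, which only needs $r=1$, but it is a genuine imprecision in the statement worth flagging.
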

\begin{proof}
We have
\begin{align*}
   |f_j^{\circ}(r) - f^{\circ}(r)| &\lesssim \sum_{\xi \in \widehat{\TT_K^2}, |\xi| \leq M} \Bigl|\, |\widehat \Phi(\xi)|^2 - |\widehat \Phi_j(\xi)|^2\, \Bigr| + (\| \Phi_j \|_2^2 + \| \Phi \|_2^2) \sup_{\xi \in \widehat{\TT_K^2}, |\xi| > M} |J_0(r|\xi|)|. 
\end{align*}
Because of the weak convergence $\Phi_j \rightharpoonup \Phi$, we have $\widehat \Phi_j(\xi) \to \widehat \Phi(\xi)$ for any $\xi \in \widehat{\TT_K^2}$, so the first term above goes to zero as $j\to \infty$ for any fixed $M$. Because $J_0(\xi) \to 0$ as $\xi \to \infty$ and $\| \Phi_j \|_2$ is uniformly bounded, the second term goes to zero as $M\to \infty$. 
\end{proof}

\begin{rem}\label{r:curvature}
The Bessel function $J_0(\xi) = \widehat {\mu_{| x |_2 = 1}}(\xi)$ decays as $\xi\to \infty$ because of the curvature of the unit circle. If we replaced unit distances in the $\ell_2$ norm with the $\ell_{\infty}$ norm then the unit circle would be replaced with the unit square. The unit square is not curved, and $\widehat {\mu_{| x |_{\infty} = 1}}(\xi)$ does not decay as $\xi \to \infty$, so the above proof breaks down. 

As it turns out, the conclusion is not true for the $\ell_{\infty}$ norm. Let
\begin{equation*}
    A_N = \{(x, y) \in \TT_K^2\, :\, \lfloor Nx \rfloor , \lfloor N y \rfloor \text{ are even}.\}
\end{equation*}
Let $g_N = \ind_{A_N}$. Then
\begin{equation*}
    \text{Density of distance 1 pairs in $\ell_{\infty}$} = \fint_{| x |_{\infty} = 1} f^{\circ}(x)\, d\mu(x) = \begin{cases}
        \frac{1}{2} & \text{$N$ is even,} \\
        0 & \text{$N$ is odd}.
    \end{cases}
\end{equation*}
On the other hand, $g_N \rightharpoonup \frac{1}{4}$. Even though $g_N$ has a weak limit, the densities of distance one pairs do not converge.
\end{rem}

The following supersaturation principle follows quickly from compactness.
\begin{lemma}\label{l:supersat}
Let $K > 0$ be fixed. For all $\eps > 0$, there exists a $\gamma = \gamma(\varepsilon, K) > 0$ such that if $\delta(A) \ge m_{1}(\TT_K^2) + \eps$ then $s(1; A) > \gamma$.
\end{lemma}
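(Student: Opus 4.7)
The natural approach is compactness plus contradiction, leveraging \cref{l:conv}. Suppose for contradiction that no such $\gamma$ exists for some fixed $\varepsilon > 0$: then there is a sequence of measurable sets $A_j \subset \TT_K^2$ with $\delta(A_j) \ge m_1(\TT_K^2) + \varepsilon$ and $s(1; A_j) \to 0$. Because $\delta(A_j) \le 1$, this forces $f^{\circ}_j(1) = s(1; A_j)\, \delta(A_j)^2 \to 0$, where $f^{\circ}_j$ is the radialized autocorrelation of $\Phi_j := \ind_{A_j}$.

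The functions $\Phi_j$ are uniformly bounded in $L^2(\TT_K^2)$, so by Banach--Alaoglu we may pass to a subsequence along which $\Phi_j \rightharpoonup \Phi$ weakly in $L^2$, with $0 \le \Phi \le 1$ almost everywhere. By \cref{l:conv}, the radialized autocorrelation functions converge pointwise, $f^{\circ}_j(r) \to f^{\circ}(r)$ for every $r$, where $f^{\circ}$ is the radialized autocorrelation of $\Phi$. In particular, $f^{\circ}(1) = 0$. Weak convergence against the constant function $1 \in L^2(\TT_K^2)$ gives $\langle \Phi, 1\rangle = \lim_j \langle \Phi_j, 1\rangle \ge m_1(\TT_K^2) + \varepsilon$.

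The last step is to extract an honest unit-distance-avoiding set of density exceeding $m_1(\TT_K^2)$ from the weak limit $\Phi$. Since $\Phi \ge 0$ pointwise, the autocorrelation $f(x) = \frac{1}{K^2}\int_{\TT_K^2} \Phi(y)\, \Phi(y - x)\, dy$ is also pointwise nonnegative. The identity
\begin{equation*}
    0 = f^{\circ}(1) = \fint_0^{2\pi} f(v_\theta)\, d\theta
\end{equation*}
then forces $f(v_\theta) = 0$ for almost every $\theta$, and by Fubini this means $\Phi(y)\Phi(y - v_\theta) = 0$ for a.e.\ $(y,\theta)$. Setting $B := \{x \in \TT_K^2 : \Phi(x) > 0\}$, we conclude that $\ind_B(y)\ind_B(y - v_\theta) = 0$ for a.e.\ $(y,\theta)$, so $s(1; B) = 0$. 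Moreover $\delta(B) \ge \langle \Phi, 1\rangle \ge m_1(\TT_K^2) + \varepsilon$, contradicting the definition of $m_1(\TT_K^2)$.

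The only conceptual subtlety, and the one place curvature of the unit circle truly enters (through \cref{l:conv}), is the passage from the weak limit $\Phi$ to the actual avoiding set $B$: because the weak limit need not be $\{0,1\}$-valued, one must use nonnegativity of $\Phi$ to pass from $f^{\circ}(1) = 0$ to the statement that the support of $\Phi$ avoids unit distances.
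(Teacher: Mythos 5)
Your proof is correct and follows essentially the same path as the paper's: compactness via Banach--Alaoglu, passing the autocorrelation to the weak limit through \cref{l:conv}, and then extracting a genuine unit-distance-avoiding set from the support of the weak limit $\Phi$. The only cosmetic difference is in the final extraction step: the paper uses the super-level set estimate $s(1, \ind_{g>c}) \le c^{-2} s(1,g)$, whereas you argue directly from nonnegativity of the autocorrelation plus Fubini that the support $B = \{\Phi > 0\}$ avoids unit distances; both are small variations of the same idea.
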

\begin{proof}
Suppose the conclusion does not hold. Let $A_j$ be a sequence of sets with $s(1, A_j) \to 0$ and $\delta(A_j) > m_1(\TT_K^2) + \varepsilon$. Extract a weakly convergent subsequence $\ind_{A_{j_k}} \rightharpoonup g$. Then $0 \leq g \leq 1$ and $s(1, g) = \lim_{k\to \infty} s(1, A_{j_k}) = 0$. Let $A = \ind_{g > 0}$. Then 
\begin{align*}
    s(1, A) = \sup_{c > 0} s(1, \ind_{g > c}) \leq \sup_{c > 0} c^{-2}s(1, g) = 0,
\end{align*}
so $A$ is a unit distance avoiding set. But $\int g \geq m_1(\TT_K^2) + \varepsilon$ and $\int g \leq \delta(A)$, so $\delta(A) \geq m_1(\TT_K^2)+\varepsilon$. This is a contradiction. 
\end{proof}

\section{Container lemma}\label{s:containers}
The method of graph containers, initially developed and applied by Kleitman and Winston in 1980~\cite{KW80} and Sapozhenko~\cite{SAP03}, gives an algorithmic approach to identify and enumerate the independent sets of a ``well behaved'' graph. A consequence of such a \textit{container lemma} is that the independent sets of such a graph lie in one of a relatively small number of \textit{containers}, sparse sets that are not too much larger than the size of a maximum independent set. This method was later extended to studying hypergraph independent sets independently by Saxton and Thomason~\cite{ST15} and Balogh, Morris, and Samotij~\cite{BMS18} in 2015. In addition to enabling enumeration of independent sets, such lemmas often give strong characterizations of the structure of the independent sets of a given graph or hypergraph.

We apply a graph container lemma in conjunction with a linear-programming based stability result to arrive at a graph version of~\cref{t:main-container}. To state this version, we will need to define a graph that encodes unit-distance avoiding sets.

\begin{notn}
We define a graph $G = (V, E) = \mcG(N, K)$, with vertex set $V = \{Q_j\}_{j \in [(NK)^2]}$, which represents the $(NK)^2$ many $1/N \times 1/N$ squares that arise by placing a scale $1/N$ square grid onto $\TT_K^2.$ We let 
$$E = \left\{ (Q_i, Q_j) \mid \text{ there exists } x \in Q_i, y \in Q_j,\, |x - y| = 1 \right\},$$
i.e. there is a pair of points, one from $Q_i$ and one from $Q_j$, that are at unit distance. By a slight abuse of notation, we overload $Q_i$ to both refer to the vertex in graph $G$ as well as the associated $1/N \times 1/N$ square $Q_i \subset \TT_K^2.$ We think of $K$ as a fixed large constant and $N$ going to infinity.

In this setup, the independent sets of $G$, denoted $\mc I(G)$, are in correspondence with the unit distance avoiding sets in $\TT_K^2$ that are locally constant at scale $\frac{1}{N}$. 

For any graph $G = (V, E)$ we let $e(G) = |E|$ and $v(G) = |V|$. We let $\Delta(G)$ denote the maximum degree of $G$ and let $\overline{d}(G)$ denote the average degree of $G$. If $A \subset V$, we let $G[A]$ denote the induced subgraph of $G$ on vertex subset $A.$
\end{notn}

We begin by making a pair of basic observations about the graph $G = \mcG(N, K)$.
\begin{obs}\label{o:max-deg}
There exists a constant $C > 0$ such that for $G = \mcG(N, K)$ as above, $\Delta(G) \le C N$.
\end{obs}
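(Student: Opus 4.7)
The plan is to control $\Delta(G)$ by a simple area/volume argument: for a fixed grid square $Q_i$, I will bound the number of squares $Q_j$ that can contain a point at distance exactly $1$ from some point of $Q_i$ by showing that all such $Q_j$ must lie in a thin annulus around the unit circle centered at $Q_i$.

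More precisely, let $p_i$ denote the center of $Q_i$, so that every point of $Q_i$ lies within distance $\tfrac{\sqrt 2}{2N}$ of $p_i$. If $(Q_i, Q_j)$ is an edge, then there exist $x \in Q_i$ and $y \in Q_j$ with $|x - y| = 1$, and so by the triangle inequality every point $y' \in Q_j$ satisfies
\begin{align*}
    1 - \tfrac{\sqrt 2}{N} \le |p_i - y'| \le 1 + \tfrac{\sqrt 2}{N}.
\end{align*}
Therefore $Q_j$ is contained in the annulus $\mathcal{A}_i = \{z \in \TT_K^2 : |z - p_i| \in [1 - \sqrt 2/N, 1 + \sqrt 2/N]\}$ (where distance is measured in the torus, and for $K$ large enough the annulus looks the same as in $\R^2$).

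The annulus $\mathcal{A}_i$ has area at most $\pi((1+\sqrt 2/N)^2 - (1-\sqrt 2/N)^2) = 4\pi\sqrt 2/N$. Since the squares $Q_j$ are disjoint with area $1/N^2$ each, the number of squares contained in $\mathcal{A}_i$ is at most $4\pi\sqrt 2 N$, giving $\Delta(G) \le CN$ with $C = 4\pi\sqrt 2 + o(1)$ as $N \to \infty$.

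There is no real obstacle here; the only minor point to check is that the annulus argument is insensitive to the torus structure for $N, K$ large (so the annulus is embedded in $\TT_K^2$ and covers its intersections with grid squares without double counting). One could alternately replace "contained in $\mathcal{A}_i$" by "intersects $\mathcal{A}_i$" and pay an additional $O(1)$ factor from boundary squares, which is absorbed into the constant $C$.
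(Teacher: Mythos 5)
The paper states \cref{o:max-deg} with no accompanying proof, treating it as self-evident, so there is no ``paper approach'' to compare against; you are supplying a proof for a claim the authors took for granted. Your annulus/area argument is the natural way to establish it and is correct in substance.

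One small slip in the constants: to bound $|p_i - y'|$ for an arbitrary $y' \in Q_j$ you should chain $|p_i - y'| \le |p_i - x| + |x - y| + |y - y'| \le \tfrac{\sqrt 2}{2N} + 1 + \tfrac{\sqrt 2}{N}$, since $|y - y'|$ can be as large as the full diameter $\tfrac{\sqrt 2}{N}$ of $Q_j$ (not its half-diagonal). So the annulus that actually contains $Q_j$ has half-width $\tfrac{3\sqrt 2}{2N}$, not $\tfrac{\sqrt 2}{N}$. This only enlarges $C$ by a constant factor; the annulus still has area $O(1/N)$ while each $Q_j$ has area $1/N^2$, so the conclusion $\Delta(G) \le CN$ goes through unchanged. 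Your closing remarks --- that the torus looks locally Euclidean once $K$ is large, and that one can harmlessly trade ``contained in the annulus'' for ``intersects the annulus'' at the cost of an $O(1)$ factor --- are both appropriate and both get absorbed into $C$.
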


\begin{obs}\label{o:subgraph}
Given any $F \subset V(G)$ for $G = \mcG(N, K)$ as defined above, we have the following two properties of the corresponding subset of $\TT_K^2$.
\begin{itemize}
    \item $\delta(F) = \frac{|F|}{N^2 K^2}$
    \item There exists some absolute constant $c_1 > 0$ such that $s(1; F) \le c_1 \frac{e(G[F])}{N^3 K^2}$
\end{itemize}
\end{obs}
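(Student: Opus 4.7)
This is an immediate volume computation. The set $F$ (regarded as a subset of $\TT_K^2$) is the union of $|F|$ closed $\frac{1}{N}\times\frac{1}{N}$ grid squares, each of Lebesgue measure $1/N^2$, with pairwise measure-zero overlap; since $\TT_K^2$ has total measure $K^2$, we get $\delta(F) = \Area(F)/K^2 = |F|/(N^2K^2)$.

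\textbf{Second bullet, main plan.} The strategy is to unfold the definition of $f^{\circ}(1;F)$, swap integrals, and then bound the resulting arc-length integrand square-by-square using the edge structure of $G$. Fubini gives
\begin{align*}
    f^{\circ}(1;F) = \fint_0^{2\pi} \delta\bigl(F \cap (F - v_\theta)\bigr)\, d\theta = \frac{1}{2\pi K^2} \int_F \mathcal H^1(S_x^1 \cap F)\, dx,
\end{align*}
where $S_x^1 = \{y \in \TT_K^2 : |y-x| = 1\}$ is the unit circle centered at $x$ and $\mathcal H^1$ is one-dimensional Hausdorff measure. The edge set $E(G)$ is calibrated so that for $x \in Q_i \subset F$ the circle $S_x^1$ can only meet $F$ inside a square $Q_j \subset F$ with $(Q_i,Q_j) \in E(G[F])$. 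Combined with a universal pointwise bound $\mathcal H^1(S_x^1 \cap Q_j) \lesssim 1/N$ (the key geometric input, discussed below), this yields
\begin{align*}
    \mathcal H^1(S_x^1 \cap F) \lesssim \deg_{G[F]}(Q_i)/N \qquad \text{for all } x \in Q_i \subset F.
\end{align*}
Integrating over $Q_i$ (volume $1/N^2$), summing over $Q_i \in F$, and invoking the handshake identity $\sum_i \deg_{G[F]}(Q_i) = 2 e(G[F])$, the plan yields $f^{\circ}(1;F) \lesssim e(G[F])/(K^2 N^3)$. Dividing by $\delta(F)^2$ produces the claimed bound on $s(1;F)$; in the container regime used later, $\delta(F)$ is bounded below by a positive constant (close to $m_1(\TT_K^2)/2$), so the factor $\delta(F)^{-2}$ is absorbed into $c_1$.

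\textbf{Main obstacle.} The only nontrivial step is the geometric lemma $\mathcal H^1(S_x^1 \cap Q_j) \lesssim 1/N$. The plan is to observe that a unit circle and an axis-aligned square intersect in at most eight points (two per side), so $S_x^1 \cap Q_j$ is a union of $O(1)$ connected circular arcs. Each such arc has chord length at most the diameter $\sqrt 2/N$ of $Q_j$, and for a unit-radius arc of length $\ell \leq \pi$ the chord-arc inequality $c = 2\sin(\ell/2) \geq 2\ell/\pi$ forces $\ell \leq \pi c / 2 \lesssim 1/N$ (automatic for $N$ large). This step uses curvature of the $\ell_2$ unit circle in an essential way — mirroring precisely the phenomenon flagged in \cref{r:curvature}, where the analogous statement fails for the $\ell_\infty$ unit ball.
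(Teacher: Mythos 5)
The paper states this as an Observation without proof, so there is no in-text argument to compare against; your proof correctly supplies the missing details. The Fubini unfolding of $f^{\circ}(1;F)$ into $\frac{1}{2\pi K^2}\int_F \mathcal{H}^1(S_x^1\cap F)\,dx$, the localization to $G[F]$-neighbors of the square containing $x$, the chord--arc bound on a single grid square, and the handshake identity are all in order. Two remarks. First, you rightly flag that the stated inequality on $s(1;F)=f^{\circ}(1;F)/\delta(F)^2$ needs a lower bound on $\delta(F)$: your argument actually proves the cleaner statement $f^{\circ}(1;F)\lesssim e(G[F])/(K^2N^3)$ for every $F$, but the claimed bound on $s(1;F)$ fails for sparse $F$ (a single pair of $G$-adjacent squares gives $s(1;F)\sim NK^2$, not $O(N^{-3}K^{-2})$). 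As you observe, the only place \cref{o:subgraph} is invoked is inside \cref{l:cont-main}, where one has $\delta(F_j)>m_1(\R^2)-\eps$, so $\delta(F)^{-2}$ is bounded by an absolute constant (near $m_1(\R^2)^{-2}$, not $m_1/2$ as in your parenthetical, though this is immaterial), and the observation is safe as used. Second, the aside attributing the bound $\mathcal{H}^1(S_x^1\cap Q_j)\lesssim 1/N$ to the curvature of the $\ell_2$ circle is a red herring: the chord--arc argument needs only that the unit circle has \emph{bounded} curvature, so a short arc inside a tiny box cannot meander, not that the curvature is nonvanishing. The same $O(1/N)$ bound holds verbatim for the $\ell_\infty$ unit sphere, whose intersection with a grid square is a union of $O(1)$ segments of length at most $\sqrt{2}/N$ each. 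The phenomenon in \cref{r:curvature} is failure of Fourier decay of $\widehat{\mu}_{|x|_\infty=1}$, which is a genuinely different manifestation of flatness and does not bear on \cref{o:subgraph}.
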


We will show the following graphical statement that is equivalent to~\cref{t:main-container}.
\begin{thm}\label{t:discrete}
Let $\gamma$ be the constant from \cref{t:main}. Let $K > 0$, $N > N_0(K)$, and $G = \mcG(N, K)$. The probability that a uniformly random $A \in \mcI(G)$ has $s(\sd; A) > 1 + \gamma/2$ is $\geq 1 - o_N(1)$. 
\end{thm}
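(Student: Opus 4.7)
The plan is to combine a standard graph container lemma with the stability statement \cref{t:main} and a concentration estimate. Applying a container lemma (Kleitman--Winston/Sapozhenko or Saxton--Thomason) to $G = \mcG(N,K)$, using $\Delta(G) \lesssim N$ from \cref{o:max-deg} and $v(G) = (NK)^2$, yields a family $\mcC$ of subsets of $V(G)$ with $\log_2 |\mcC| = o_N((NK)^2)$ such that every $I \in \mcI(G)$ lies in some $C \in \mcC$, each $|C| \leq (m_1(\TT_K^2) + o_N(1))(NK)^2$, and $e(G[C]) = o_N(N^3 K^2)$. By \cref{o:subgraph} the edge bound gives $s(1; C) = o_N(1) < \gamma$ for $N$ large.

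We then partition $\mcC$ into \emph{large} containers with $\delta(C) \geq m_1(\RR^2) - \gamma$ and \emph{small} containers. Taking $K$ large via \cref{lem:periodic_approx}, every large $C$, viewed as a $K$-periodic subset of $\R^2$, satisfies both hypotheses of \cref{t:main}, hence $s(\sd; C) \geq 1 + \gamma$. The independent sets contained in small containers number at most $|\mcC| \cdot 2^{(m_1(\RR^2) - \gamma)(NK)^2}$, which is negligible compared to $|\mcI(G)| \geq 2^{\alpha(G)} \geq 2^{(m_1(\TT_K^2) - o_N(1))(NK)^2}$ since $\log_2 |\mcC|$ is subdominant to $\gamma (NK)^2$.

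For each large container $C$, we estimate the fraction of subsets $I \subseteq C$ that are \emph{bad} in the sense that $s(\sd; I) < 1 + \gamma/2$. Sample $I$ by including each $Q_j \subseteq C$ independently with probability $\tfrac{1}{2}$, and write $\delta(I)$ as a linear combination and $f^\circ(\sd; I) = \sum_{j,k} X_j X_k W(j,k)$ as a quadratic form in the Bernoulli indicators $X_j = \ind_{Q_j \subseteq I}$. A direct calculation, noting that $\sd$ exceeds the diameter $\sqrt{2}/N$ of a single $Q_j$, gives $\EE[\delta(I)] = \delta(C)/2$ and $\EE[f^\circ(\sd; I)] = f^\circ(\sd; C)/4$, so the expected value of $s(\sd; I)$ tends to $s(\sd; C) \geq 1+\gamma$. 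Since each weight satisfies $W(j,k) \lesssim N^{-3} K^{-2}$ (coming from the $O(N^{-2})$ area of a square times an $O(N^{-1})$-length arc of admissible angles $\theta$), Hanson--Wright (or a direct Efron--Stein/Azuma martingale argument) yields sub-Gaussian concentration of both $\delta(I)$ and $f^\circ(\sd; I)$ with an exponent of order $N^3 K^2$. Hence the fraction of bad subsets of a fixed large $C$ is at most $\exp(-c(\gamma) N^3 K^2)$, and the total bad contribution $|\mcC| \cdot 2^{|C|} \cdot \exp(-c(\gamma) N^3 K^2)$ is still $o(|\mcI(G)|)$ because the concentration exponent $N^3 K^2$ dominates $|C| + \log_2|\mcC| = O((NK)^2)$.

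The main obstacle is verifying the quantitative weight bound $W(j,k) \lesssim N^{-3} K^{-2}$ and then pinning down a concentration exponent that dominates both $|C|$ and $\log_2 |\mcC|$; here one gains comfortably from the fact that $N^3 K^2 \gg (NK)^2$ once $K$ is fixed and $N \to \infty$. After first fixing $K$ large (so the errors in \cref{lem:periodic_approx} and the container lemma's sparsity parameter are below $\gamma/10$) and then sending $N \to \infty$, all error terms close up and \cref{t:discrete} follows.
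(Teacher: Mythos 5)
Your overall architecture --- a container family controlled by supersaturation, \cref{t:main} on the large containers, and concentration of $s(\sd;\cdot)$ over random subsets of a container --- is the same as the paper's. The gap is in the concentration exponent, and it matters: it lands at exactly the order where the argument has to be balanced against the container slack, which you dismiss as being ``comfortably'' dominated.

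You assert that both $\delta(I)$ and $f^\circ(\sd;I)$ concentrate with exponent $N^3K^2$. This cannot be right for $\delta(I) = (NK)^{-2}\sum_j X_j$: it is a sum of $(NK)^2$ independent Bernoullis, so the sharp exponent at a constant deviation scale $t$ is $t^2(NK)^2$, never $N^3K^2$. And even for the quadratic form $f^\circ(\sd;I) = \sum_{j,k}X_jX_kW(j,k)$, the Hanson--Wright exponent is $\min\bigl(t^2/\|W\|_F^2,\ t/\|W\|_{\mathrm{op}}\bigr)$. Your per-entry bound $W(j,k)\lesssim N^{-3}K^{-2}$ with $O(N)$ nonzero entries per row gives $\|W\|_F^2\sim N^{-3}K^{-2}$, but it also gives $\|W\|_{\mathrm{op}}\sim N^{-2}K^{-2}$ (the operator norm of a nonnegative symmetric matrix is of the order of its maximal row sum, here $N\cdot N^{-3}K^{-2}$). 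At the constant deviation scale $t\sim\gamma$ that you actually need, the operator-norm branch wins, yielding exponent $\sim (NK)^2$ again. So the true concentration exponent for $s(\sd;I)$ at scale $\gamma/2$ is $\Theta_\gamma(v(G))=\Theta_\gamma((NK)^2)$, the same order as $|C|$ and $\log_2|\mcI(G)|$.

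Consequently ``$N^3K^2\gg(NK)^2$'' does not close the argument. The comparison is really between $|\mcC|\cdot 2^{|C|}\exp(-c\gamma^2 v(G))$ and $|\mcI(G)|\geq 2^{(m_1-o_N(1))v(G)}$, with $|C|\leq(m_1+\eps_2)v(G)$ where $\eps_2$ is the supersaturation overshoot governed by $K$ and the container sparsity parameter. You must choose $K_0$ large and that sparsity parameter small enough relative to $\gamma$ that $\eps_2\ln 2 < c\gamma^2$ before letting $N\to\infty$; this balancing of constants is the missing step, and it is exactly what the paper's proof carries out (choosing $\eps_1$ and $K_0$ so that $\eps_2(\eps_1,K_0)-\gamma^2/C_1<-\eps_3$). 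With that correction the rest of your argument is sound.
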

The proof of~\cref{t:discrete} employs three key ingredients:
\begin{enumerate}
 \item A container lemma for $\mcI(G)$ (stated below as~\cref{l:cont-main});
 \item The \textit{supersaturation lemma} (\cref{l:supersat}) proved in the previous section;
 \item \cref{t:main}, overrepresentation of distance \sd\, pairs for dense sets. 
\end{enumerate}
 
Here is our graph container lemma, the proof of which follows from the Kleitman-Winston algorithm (see~\cite{KW80,SAM15}.
\begin{lemma}[Weak containers (Theorem 1.6.1~\cite{AS4})] \label{l:weak-cont}
For any $D > 0$ and any graph $G = (V, E)$ with $|V| = n$, there exists $\mcF \subset 2^{V(G)}$ such that the following properties hold:
\begin{enumerate}
    \item $|\mcF| \le {n \choose \le n/D}$
    \item For all $I \in \mcI(G)$, $I \subset F$ for some $F \in \mcF$
    \item For all $F \in \mcF$, $|F| \le \frac{n}{D} + n(D; G)$, where $n(D; G)$ is the maximum number of vertices in an induced subgraph of $G$ with at most $Dn / 4$ edges
\end{enumerate}
\end{lemma}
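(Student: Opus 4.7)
The plan is to prove this via the classical Kleitman--Winston peeling algorithm, a deterministic procedure that, on input $I \in \mcI(G)$, produces a ``fingerprint'' $S \subseteq I$ together with a container $F \supseteq I$, with the crucial property that $F$ is reconstructible from $S$ alone. Bounding the number of possible fingerprints thus bounds $|\mcF|$.

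First I would describe the algorithm. Fix a canonical ordering on $V$. Initialize the ``available'' set $A := V$ and the fingerprint $S := \emptyset$. While the induced subgraph $G[A]$ has more than $Dn/4$ edges, let $v$ be the maximum-degree vertex of $G[A]$ (ties broken by the canonical order) and query whether $v \in I$. If $v \in I$, place $v$ into $S$ and delete $v$ together with all its $G[A]$-neighbors from $A$; if $v \notin I$, delete only $v$ from $A$. When the loop halts, output $F := A \cup S$, and take $\mcF$ to be the range of this map as $I$ varies over $\mcI(G)$.

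Next I would verify the three listed properties in turn. Containment $I \subseteq F$ is maintained as a loop invariant, since the only vertices removed from $A$ are either outside $I$, or vertices of $I$ first inserted into $S$ along with their neighbors, which are not in $I$ by independence. To see that $F$ is determined by $S$, observe that the choice of $v$ at each step depends only on $A$, and the branch taken is recorded by whether $v$ is eventually placed in $S$; hence replaying the algorithm with only $S$ in hand reproduces the same sequence of sets $A$ and thus the same $F$. For the size bounds: whenever a vertex is appended to $S$, the subgraph $G[A]$ has more than $Dn/4$ edges, so $v$ (as a max-degree vertex) has degree comparable to $D$, and the iteration removes $\gtrsim D$ vertices from $A$; after calibrating the constants in the stopping rule this yields $|S| \leq n/D$. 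The terminal $A$ is an induced subgraph with at most $Dn/4$ edges, so $|A| \leq n(D; G)$ by definition, hence $|F| \leq n/D + n(D;G)$. Finally, each fingerprint is a subset of $V$ of size at most $n/D$, so $|\mcF| \leq \binom{n}{\leq n/D}$.

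The only genuinely fiddly point is the constant tuning between the edge-count stopping threshold $Dn/4$ and the fingerprint bound $n/D$: one has to check that max-degree peeling with this precise stopping rule really gives $|S| \leq n/D$ rather than a slightly larger constant multiple, which may require stating the stopping condition in terms of both edge count and minimum removed-degree simultaneously. This is exactly the bookkeeping carried out in the original Kleitman--Winston argument and reproduced as Theorem~1.6.1 in~\cite{AS4}; no new ingredient beyond careful accounting is needed.
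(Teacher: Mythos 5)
The paper does not actually prove this lemma; it cites it (as Theorem~1.6.1 of~\cite{AS4}) and points to the Kleitman--Winston algorithm~\cite{KW80,SAM15}. Your proposal reconstructs exactly that argument, so your overall approach agrees with what the paper has in mind. The algorithmic description, the invariant $I \subseteq F$, the reconstructibility of $F$ from the fingerprint $S$ (replay the run, branching on whether the chosen $v$ lies in $S$), and the role of $n(D;G)$ are all correct as stated.

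The issue you flag at the end is, however, a genuine gap and not merely a routine check to defer. With the stopping rule ``\,$e(G[A]) > Dn/4$,'' the only degree guarantee available is $\Delta(G[A]) \ge 2e(G[A])/|A| > D/2$ (using $|A|\le n$), so each ``yes'' step deletes more than $1 + D/2$ vertices of $A$ and you obtain only $|S| < 2n/D$, not $|S|\le n/D$. If instead you strengthen the running condition to $\Delta(G[A])\ge D$ so that each ``yes'' step deletes at least $D+1$ vertices and $|S|\le n/D$ holds, then at termination you get only $e(G[A]) < D|A|/2 \le Dn/2$, not $\le Dn/4$. So the naive max-degree peeling delivers \emph{either} the fingerprint bound \emph{or} the terminal-edge bound in the form the lemma asserts, but not both simultaneously; matching both requires either a reparameterization of the lemma or a cleverer accounting than ``degree comparable to $D$.'' (None of this affects the paper's downstream use: \cref{c:weak-cont-spec} takes $D = \overline d$ and the later bounds absorb constant factors, so a version of the lemma with an extra factor of $2$ would serve equally well.) To make your sketch a complete proof of the statement as written, you would need to pin down this bookkeeping explicitly rather than asserting that it works out.
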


\begin{cor}\label{c:weak-cont-spec}
For any graph $G = (V, E)$ with $|V| = n$, average degree $\ol{d} = \ol{d}(G)$, and maximum degree $\Delta(G) \le C \overline{d}$, there exists $\mcF \subset 2^{V(G)}$ such that the following properties hold:
\begin{enumerate}
    \item $|\mcF| \le {n \choose \le n/\ol d}$
    \item For all $I \in \mcI(G)$, $I \subset F$ for some $F \in \mcF$
    \item For all $F \in \mcF$, $|F| \le n \left(1 + 1/\ol d - 1/(4C) \right)$ 
\end{enumerate}
\end{cor}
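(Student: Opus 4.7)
The plan is to deduce the corollary directly from Lemma~\ref{l:weak-cont} by making the specific choice $D = \overline{d}(G)$. With this choice, properties (1) and (2) of the corollary are immediate translations of the corresponding properties in Lemma~\ref{l:weak-cont}, so the only content lies in verifying property (3): that for $D = \overline{d}$, the quantity $n(D; G)$ can be upper bounded by $n(1 - 1/(4C))$ whenever $\Delta(G) \leq C\overline{d}$.

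To bound $n(\overline{d}; G)$, I would argue by contrapositive. Suppose $S \subset V$ has $|S| > n(1 - 1/(4C))$, equivalently $|V \setminus S| < n/(4C)$. Every edge of $G$ not in $G[S]$ is incident to a vertex of $V \setminus S$, so the number of edges removed by passing from $G$ to $G[S]$ is at most
\begin{equation*}
    \sum_{v \in V \setminus S} \deg_G(v) \;\leq\; |V \setminus S| \cdot \Delta(G) \;<\; \frac{n}{4C} \cdot C \overline{d} \;=\; \frac{\overline{d}\, n}{4}.
\end{equation*}
Since $|E(G)| = \overline{d}\, n / 2$, this yields $|E(G[S])| > \overline{d}\, n/2 - \overline{d}\, n/4 = \overline{d}\, n/4$. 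Hence any $S$ with $|E(G[S])| \leq \overline{d}\, n/4$ must satisfy $|S| \leq n(1 - 1/(4C))$, giving $n(\overline{d}; G) \leq n(1 - 1/(4C))$.

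Plugging this into the third conclusion of Lemma~\ref{l:weak-cont} with $D = \overline{d}$ gives
\begin{equation*}
    |F| \;\leq\; \frac{n}{\overline{d}} + n\bigl(1 - \tfrac{1}{4C}\bigr) \;=\; n\Bigl(1 + \tfrac{1}{\overline{d}} - \tfrac{1}{4C}\Bigr),
\end{equation*}
which is exactly property (3) of the corollary. There isn't really a hard step here; the whole proof is a careful accounting of the parameter choice $D = \overline{d}$ combined with the elementary observation that removing a small fraction of vertices from a graph cannot destroy too many edges when the maximum degree is controlled by the average degree. The one thing to watch is keeping track of which inequalities are strict versus non-strict, but this is routine since we only need a non-strict bound on $|F|$.
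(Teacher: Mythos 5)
Your proof is correct and takes essentially the same approach as the paper: both apply Lemma~\ref{l:weak-cont} with $D = \overline{d}$ and bound $n(\overline{d}; G)$ by noting that removing $k$ vertices deletes at most $k\Delta(G) \leq kC\overline{d}$ edges; your contrapositive phrasing is just a cosmetic reorganization of the paper's direct inequality chain.
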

\begin{proof}
We apply \cref{l:weak-cont} with $D = \ol d$.
Consider $U \subset V(G)$ where $e(G[U]) \le \ol d n / 4$. Note that $$e(G[U]) \ge e(G) - \sum_{v \in V \backslash U} d(v) \ge e(G) - \Delta(G) (n - |U|).$$
Since $e(G) = \ol d n / 2$, we can combine the inequalities to find that 
$$\frac12 \ol d n - C \ol d (n - |U|) \le \frac12 \ol d n - \Delta(G) (n - |U|) \le e(G[U]) \le \frac14 \ol d n.$$
Rearranging gives that 
$$|U| \le \left(1 - \frac{1}{4C} \right) n.$$
The result then follows by direct application of~\cref{l:weak-cont} with $D = \ol d$.
\end{proof}

We iterate~\cref{c:weak-cont-spec} in conjunction with~\cref{l:supersat} to obtain our stronger container lemma.
\begin{lemma}[Containers]\label{l:cont-main}
Let $G = \mcG(N, K)$. Fix any $\eps > 0$ and let $C = C (\eps) > 0$ be sufficiently large in $\eps$. Then, for all sufficiently large $N$, 
there is a collection of vertex subsets $\mcF \subset 2^{V(G)}$ with the following properties:
\begin{enumerate}
    \item $|\mcF| \le  2^{ C NK^2 \log(N K)}$; \label{item:bdd_containers}
    \item For all $I \in \mcI(G),$ there exists some $F \in \mcF$ such that $I \subset F$;
    \item For all $F \in \mcF$, either $\delta(F) < m_1(\RR^2) - \eps$ or $s(1, F) \le \eps$\label{item:small_dens}
\end{enumerate}
\end{lemma}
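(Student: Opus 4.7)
The plan is to iterate Corollary~\ref{c:weak-cont-spec}, peeling off sub-containers one layer at a time. I begin with the trivial container $F_0 = V(G)$ and maintain a family $\mcF$ of ``active'' containers. As long as some $F \in \mcF$ has both $\delta(F) \geq m_1(\RR^2) - \eps$ and $s(1; F) > \eps$, I remove $F$ from $\mcF$ and replace it by the sub-containers obtained by applying Corollary~\ref{c:weak-cont-spec} to the induced subgraph $G[F]$. I stop the process once every remaining container satisfies property~\ref{item:small_dens}.

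The main calculation is to verify that each iteration shrinks the size of a container by a multiplicative constant depending only on $\eps$. Suppose $F$ has $s(1; F) > \eps$. By Observation~\ref{o:subgraph}, $e(G[F]) \gtrsim_\eps N^3 K^2$, and since $|F| \le (NK)^2$, the average degree satisfies $\ol{d}(G[F]) \gtrsim_\eps N$. Together with $\Delta(G[F]) \le \Delta(G) \le CN$ from Observation~\ref{o:max-deg}, the hypothesis $\Delta \le C'\,\ol{d}$ of Corollary~\ref{c:weak-cont-spec} holds with $C' = O_\eps(1)$ (independent of $N$ and $K$). Applying the corollary to $G[F]$ yields sub-containers of size at most $|F|(1 - \alpha)$ for some $\alpha = \alpha(\eps) > 0$, once $N$ is large enough that the $1/\ol{d}$ correction is absorbed into a strictly positive gap.

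Because each iteration multiplies $|F|$ by $(1-\alpha)$, after $t = O_\eps(1)$ iterations the density of every surviving branch drops below $m_1(\RR^2) - \eps$, and the iteration halts on that branch. At each step the total number of containers grows by a factor of at most $\binom{|F|}{\leq |F|/\ol{d}(G[F])} \le 2^{O_\eps(NK^2 \log(NK))}$, since $|F|/\ol{d}(G[F]) = O_\eps(NK^2)$. Compounding over the $O_\eps(1)$ iterations gives $|\mcF| \le 2^{O_\eps(NK^2 \log(NK))}$, matching the bound in~\ref{item:bdd_containers}. The supersaturation lemma~\ref{l:supersat} plays a complementary role: it confirms that once we fall into the $s(1; F) \leq \eps$ alternative, the container cannot have density much above $m_1(\TT_K^2)$, so the two alternatives in~\ref{item:small_dens} together give a meaningful dichotomy.

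The main obstacle is the bookkeeping: the shrinkage factor $\alpha$ must be independent of $N$ and $K$ so that the iteration really terminates after $O_\eps(1)$ rounds. This reduces to checking that the constants in Observations~\ref{o:max-deg} and~\ref{o:subgraph} are indeed $N,K$-independent (they come from the geometry of unit distances between $1/N$-grid squares and thus scale correctly), and that the $1/\ol{d}$ correction in Corollary~\ref{c:weak-cont-spec} is negligible compared to $1/(4C')$ when $N$ is large. Once this is in hand, the remaining steps are routine arithmetic combining the per-iteration container count with the $O_\eps(1)$ bound on the depth.
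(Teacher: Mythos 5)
Your proof is correct and follows essentially the same iterative argument as the paper: repeatedly apply Corollary~\ref{c:weak-cont-spec} to any container with $\delta \geq m_1(\RR^2)-\eps$ and $s(1;\cdot) > \eps$, using Observations~\ref{o:max-deg} and~\ref{o:subgraph} to show the average degree is $\gtrsim_\eps N$ while the max degree is $\lesssim N$, which gives a constant multiplicative shrinkage per round and hence $O_\eps(1)$ rounds, with the container count compounding to $2^{O_\eps(NK^2\log(NK))}$. (One minor note: the paper's proof of this particular lemma does not actually invoke the supersaturation lemma; it enters only later in the proof of Theorem~\ref{t:discrete}, so your remark about its complementary role is correct but extraneous here.)
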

\begin{rmk}
The above result would continue to be true (with a quantitatively different $C$ in~\cref{item:bdd_containers}), if we replaced $\delta(F) < m_1(\RR^2) - \varepsilon$ with the stronger condition $\delta(F) < 1/1000$ in~\cref{item:small_dens} above. When we apply~\cref{l:cont-main}, we will handle ``small'' containers (i.e. those with density $< m_1(\RR^2) - \varepsilon$) separately, so our choice of parameters here does not make much difference.
\end{rmk}
\begin{proof}
Throughout, we assume $N$ is sufficiently large. Fix an arbitrary total order $v_1, \ldots, v_{v(G)}$ on the $N^2K^2$ vertices of $G$. We construct $\mcF$ via the following iterative algorithm.
Initialize $\mcF_0 = \{V\}$. We then sequentially apply the below process for $i \in \NN$.
For each $F_{j} \in \mcF_i$, if $\delta(F_j) > m_1(\RR^2) - \eps$ and $s(1; F) > \eps$, we apply~\cref{c:weak-cont-spec} to $G[F_{j}]$ to arrive at a set of containers $\mcF_{i, j}$. If $\delta(F_j) \le m_1(\RR^2) - \eps$ or $s(1; F) \le \eps$, we let $\mcF_{i, j} = \{F_j\}$. We then set $$\mcF_{i+1} = \bigcup_{j \in \mcF_i} \mcF_{i, j}$$ 
If $\mcF_{i+1} = \mcF_{i},$ we abort and return $\mcF = \mcF_{i}.$ We will show that this process repeats a bounded number of times and yields a family of subsets $\mcF$ that satisfy the conditions of the statement.

Consider some $F_j \in \mcF_i$ where $\delta(F_j) > m_1(\RR^2) - \eps$ and $s(1; F) \ge \eps$. Applying~\cref{o:subgraph}, we find that 
$$\eps \le s(1; F_j) \le c_1 \frac{e(G[F])}{N^3 K^2}$$
Rearranging gives that for constant $c_2 = \frac{2\eps}{c_1} > 0$, we have 
$$\ol d(G[F_j]) = \frac{2 e(G[F_j])}{|F_j|} \ge \frac{2\eps}{c_1} \cdot \frac{N^3 K^2}{N^2 K^2} \ge c_2 N$$
We further have (see~\cref{o:max-deg}) $\Delta(G[F_j]) \le \Delta(G) \le C N$ for some constant $C > 0$, and thus $\Delta(G[F_j]) \le c_3 \cdot \ol d(G[F_j])$ (for $c_3 = C/c_2$). Therefore, applying~\cref{c:weak-cont-spec} gives a family $\mcF_{i, j}$ of containers for the independent sets of $G[F_j]$ such that $|\mcF_{i, j}| \le {|F_j| \choose \le |F_j| /(c_2 N)}$ and where for each $F \in \mcF_{i, j}$, $$|F| \le |F_j| \left(1 + \frac{1}{c_2 N} - \frac{1}{4 c_3} \right) \le |F_j| \left(1 - \underbrace{\frac{1}{8 c_3}}_{=: c_4} \right),$$
and thus $\delta(F) \le (1 - c_4) \delta(F_j)$. Let $i_0$ be a sufficiently large constant such that $(1 - c_4)^{i_0} < m_1(\RR^2) - \eps$. Then, for some $i \le i_0$, we must have $\mcF_{i + 1} = \mcF_{i}$ i.e. the process terminates after at most $i_0$ iterations. Further, for consecutive iterations, we have that for some constant $c_5 > 0$
$$
\frac{\mcF_{i+1}}{\mcF_i} \le {|F_j| \choose \le |F_j|/(c_2 N)} \le {(1 - c_4)^{i_0} N^2 K^2 \choose \frac{(1 - c_4)^{i_0}}{c_2} N K^2} \le 2^{c_5 NK^2 \log (N K)},
$$
and since $i_0$ is a constant, we have $|\mcF| \le 2^{c_6 NK^2 \log (N K)}$ for some constant $c_6 > 0$, thereby verifying Condition 1. Condition 2 follows by the guarantee of~\cref{c:weak-cont-spec}, and Condition 3 follows by our stopping condition for the recursive application of~\cref{c:weak-cont-spec} in constructing $\mcF.$
\end{proof}

We are now ready to prove~\cref{t:discrete}.

\begin{proof}[Proof of~\cref{t:discrete}]
Let $G = \mcG(N, K)$. Take $\gamma > 0$ as in~\cref{t:main}. 
Let $K > K_0$ be a large constant and $N > N_0(K)$. 
As long as $K$ and $N$ are large enough, Lemma \ref{lem:approximation} and Lemma \ref{lem:periodic_approx} together imply $\alpha(G) \ge (m_1(\RR^2) - o_N(1)) v(G)$ and thus $|\mcI(G)| \ge 2^{(m_1(\RR^2) - o_N(1)) v(G)}$. We will upper bound the number of independent sets $A \in \mcI(G)$ with $s(\sd; A) < 1 + \gamma/2$.

Pick a small constant $\varepsilon_1 > 0$ to be chosen later, and let $\mcF \subset 2^{V(G)}$ be the set of containers obtained by applying~\cref{l:cont-main}. Note that for all $I \in \mcI(G), I \subset F$ for some $F \in \mcF$. We partition $\mcF = \mcF_1 \cup \mcF_2$, where 
$$\mcF_1 := \left\{F \in \mcF : \delta(F) \le m_1(\RR^2) - \eps_1 \right\}, \quad \mcF_2 = \mcF \backslash \mcF_1.$$
For all $F \in \mc F_2$, we have (choosing $\varepsilon_1 < \gamma)$
\begin{itemize}
    \item $\delta(F) \geq m_1(\RR^2) - \varepsilon_1 \geq m_1(\RR^2) - \gamma $
    \item $s(1; F) \leq \varepsilon_1 \leq \gamma$
    \item By~\cref{t:main} combined with the lower bound on $\delta(F)$ and upper bound on $s(1; F)$, we find that $s(\sd; F) \ge 1 + \gamma$.
    \item Because $s(1; F) \leq \eps_1$, supersaturation (\cref{l:supersat}) implies $\delta(F) \leq m_1(\TT_K^2) + \widetilde \eps_2 \leq m_1(\RR^2) + \eps_2$ where $\eps_2 = \eps_2(\eps_1, K_0)$ goes to zero as $\eps_1 \to 0$ and $K_0 \to \infty$. 
\end{itemize}
With this partition, we have the following:
\begin{align*}
\left| \left\{ A \in \mcI(G) : s(\sd; A) < 1 + \gamma/2 \right\} \right| \le \sum_{F \in \mcF_1} 2^{|F|} + \sum_{F \in \mcF_2} \left| \left\{ A \subset F : s(\sd; A) < 1 + \gamma/2 \text{ and } A \in \mcI(G) \right\} \right|
\end{align*}
We can crudely bound the first term by noting that $|\mcF_1| \le |\mcF|$ and that for $N$ sufficiently large, we have the following:
\begin{align*}
\sum_{F \in \mcF_1} 2^{|F|} \le |\mcF| 2^{(m_1(\RR^2) - \eps_1)v(G)} \le 2^{C NK^2 \log (NK) + (m_1(\RR^2) - \eps_1) N^2 K^2} \ll  |\mcI(G)|,
\end{align*}
For the second term, note that for $A \subset V(G)$, the function $A \mapsto s(\sd; A)$ is $\frac{C_1}{v(G)}$-Lipschitz for some absolute constant $C_1 > 0$. 
Fix some $F \in \mcF_2$.
For a uniformly random subset $A \subset F$, $\EE[s(\sd; A)] = s(\sd; F) \ge 1 + \gamma$. By concentration of measure (for instance one can use the Azuma-Hoeffding inequality),
$$\P(s(\sd; A) \le 1 + \gamma/2) \le \exp\left(-\frac{1}{C_1} \gamma^2 v(G) \right)$$
Therefore, 
\begin{align*}
\sum_{F \in \mcF_2} \left| \left\{ A \subset F : s(\sd; A) < 1 + \frac{\gamma}{2} \text{ and } A \in \mcI(G) \right\} \right| &\le |\mcF_2| 2^{(m_1(\RR^2) + \eps_2) v(G)} \exp\left(-\frac{1}{C_1} \gamma^2 v(G) \right) \\
&=  2^{CNK^2 \log(NK) + (m_1(\RR^2) + \eps_2 - \frac{1}{C_1} \gamma^2) v(G)}. 
\end{align*}
Choose $K_0$ large enough and $\eps_1$ small enough that $\eps_2(\varepsilon_1, K_0) - \frac{1}{C_1} \gamma^2 < -\eps_3$ for some fixed $\eps_3 > 0$. Because $|\mcI(G)| \ge 2^{(m_1(\RR^2) - o_N(1)) v(G)}$, we find 
\begin{align*}
    \Pr[s(\sd; A) \leq 1+\gamma/2] \geq 1-o_N(1)
\end{align*}
as desired. 

\end{proof}

\section{Excess pair correlation via linear programming}\label{s:lp}
In this section we apply the linear programming approach pioneered by~\cite{SZE84,FV10} 
to verify~\cref{t:main}. These LP bounds come from studying the Fourier expansion of a radialized version of the \textit{autocorrelation function} $f(x) = \delta(A \cap (A - x)) = \langle \ind_A, \ind_{A - x}\rangle$ for measurable $A \subset \TT_K^2$. We first recall some general properties of $f(x)$, as observed in~\cite{SZE84,FV10,AM22, KMOFR16,ACMVZ22}.
\begin{itemize}
    \item[(D)] $f(0) = \delta(A)$
    \item[(G)] If $G$ is any finite graph with vertex set $V(G) \subset \R^2$, then (c.f.~\cite{FV10})
    \begin{equation*}
        \sum_{x \in V(G)} f(x) - \sum_{(x, y) \in E(G)} f(x - y) \le \alpha(G) f(0).
    \end{equation*}
    In particular, if $G$ is a subgraph of the unit distance graph, then 
    \begin{equation*}
        \sum_{x \in V(G)} f^{\circ}(x)  \le \alpha(G) f(0) + f^{\circ}(1) |E(G)| 
    \end{equation*}

    \item[(CT)] Let $\theta \in [0, 2\pi)$, and let $G_1(\theta), G_2(\theta)$ be the graphs defined in~\cite[page 1250]{AM22}. We have 
    \begin{equation*}
         \sum_{(x, y) \in {V(G_1) \choose 2}} f^{\circ}(|x - y|) - \sum_{x\in V(G_2)} f^{\circ}(|x|) \ge 5\delta(A) - 1  - c_{CT}f^{\circ}(1).
    \end{equation*}
    for some absolute $c_{CT} > 0$. 
\end{itemize}
The most complicated constraint is (CT). A version of this was proved in~\cite[page 1250]{AM22} under the hypothesis $f^{\circ}(1) = 0$. The above version has a quantitative dependence on $f^{\circ}(1)$ and follows from the same proof. 
Since $f(x) = \langle \ind_A, \ind_{A - x} \rangle$, the radialized autocorrelation function $f^{\circ}(r)$ has a Fourier expansion (per~\eqref{eq:fourier_synth_fcirc})
\begin{equation}\label{e:kappa}
f^{\circ}(r) = \sum_{\xi \in \widehat{\TT_K^2}} J_0(r|\xi|) \widehat{f}(\xi) = \sum_{t \in \RR_+} J_0(t r) \underbrace{\sum_{|\xi| = t} | \widehat{\ind_A}(\xi)|^2}_{=: \kappa(t)} = \sum_{t \in \RR_+} J_0(t r) \kappa(t),
\end{equation}
where $J_0(z)$ denotes the zeroth Bessel function of the first kind (see \eqref{eq:besselJ0}). Here $\xi$ ranges over $\frac{2\pi}{K}\Z^2$, so $|\xi|$ ranges over a discrete set of real numbers and $\kappa(t)$ takes values on this discrete set of $t$. The notation $\sum_{t\in \R_+}$ refers to a countable sum over this suppressed index set.
We have the following relations involving $\kappa$.
\begin{itemize}
    \item[(D)] $\kappa(0) = \widehat{f}(0) = |\widehat{\ind_A}(0)|^2 = \delta(A)^2$
    \item[(F1)] $\kappa(t) \ge 0$ for all $t \in \RR$ 
    \item[(F2)] $\sum_{t \in \RR_+} \kappa(t) = \sum_{\xi \in \widehat{\TT_K}^2} \widehat{f}(\xi) = f(0) = \delta(A)$
    \item[(A1)] $f^{\circ}(1) = \sum_{t \in \RR_+} \kappa(t) J_0(t)$
    \item[(A2)] $f^{\circ}(\sd) = \sum_{t \in \RR_+} \kappa(t) J_0(\sd t)$
    \item[(G)] If $G$ is any finite graph, then
    \begin{equation*}
        \sum_{t\in \R_+} \kappa(t)\Bigl (\sum_{x\in V(G)} J_0(t|x|) - \sum_{(x,y) \in E(G)} J_0(t| x-y|)\Bigr) \leq \alpha(G)\delta(A).
    \end{equation*}

    \item[(CT)] For any $\theta \in [0,2\pi)$ we have
    \begin{equation*}
         \sum_{t\in \RR_+}\kappa(t) \Bigl(\sum_{(x, y) \in {V(G_1(\theta)) \choose 2}} J_0(t|x - y|) - \sum_{x\in V(G_2(\theta))} J_0(t|x|)\Bigr) \ge 5\delta(A) - 1  - c_{CT}f^{\circ}(1).
    \end{equation*}
\end{itemize}
With the exception of (D), these are all linear constraints on the $\{\kappa(t)\}, \delta(A)$, which motivated a line of work (c.f.~\cite{SZE84,FV10,AM22, KMOFR16,ACMVZ22}) proving upper bounds on $m_1(\RR^2)$ by solving linear programs.
In order to prove~\cref{t:main}, we study how large $\delta(A)$ can be for a set $A$ where both $s(1; A) \le \gamma$ and $s(\sd; A) \le 1 + \gamma$. The linear program dual produces a \textit{witness function} of the following form. 
Let $v_0, v_1, v_{\sd}$, $w_{M_i}$ for $i \in [3]$, $w_{T_i}$ for $i\in [10]$, and $w_{\theta}$ for $\theta$ in a finite collection of angles $\theta \in [0, 2\pi]$ be a collection of nonnegative coefficients. Define witness function 
\begin{align}\label{eq:witness}
    W(t) &= v_0 + v_1 J_0(t) + v_{\sd} J_0(\sd t) + \sum_{i \in [3]} w_{M_i} \sum_{x\in V(M_i)} J_0(t|x|) + \sum_{i\in [10]} w_{T_i} \Bigl(\sum_{x\in V(T_i)} J_0(t|x|) - \sum_{\{x,y\} \in E(T_i)} J_0(t| x - y |)\Bigr) \notag \\ 
    &\qquad - \sum_{\theta} w_{\theta} \Bigl(\sum_{\{x,y\} \in {V(G_1(\theta)) \choose 2}} J_0(t|x-y|) - \sum_{x\in V(G_2(\theta))} J_0(t|x|)\Bigr) 
\end{align}
The terms in $W(t)$ correspond to constraints on $\kappa(t)$. 

\begin{lemma}
Let $v_0, v_1, v_{\sd}, w_{M_i}, w_{\theta}$ be a choice of non-negative coefficients. 
Suppose that for $W(t)$ as defined above, $W(0) \geq 1$ and $W(t) \geq 0$ for all $t \geq 0$. Let $A \subset \RR^2$ be a periodic set with $\delta(A) = \delta$, $s(1; A) \leq \gamma$ and $s(\sd; A) \leq 1+\gamma$. Then for an absolute constant $C > 0$
\begin{align*}
    -(1-v_{\sd})\delta^2 + \delta \left[v_0 + 2\sum_{i\in [3]} w_{M_i} + \sum_{i\in [10]} w_{T_i} - 5\sum_{\theta} w_{\theta}\right] + \sum_{\theta} w_{\theta} + C\gamma[\text{sum of coefficients}] \geq 0.
\end{align*}
Here `sum of coefficients' is $v_0 + v_1 + v_{\sd} +  \sum_i w_{M_i} + \sum_{\theta} w_{\theta}$. 
\end{lemma}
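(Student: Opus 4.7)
The plan is a standard LP duality argument: pair the nonnegative spectral mass $\kappa(t)$ against the nonnegative witness $W(t)$ and evaluate $\sum_{t\in \R_+} \kappa(t) W(t)$ in two ways. Because $\kappa, W \geq 0$ pointwise and $\kappa(0) = \delta^2$ by (D), we get the lower bound
\begin{align*}
\sum_{t\in \R_+} \kappa(t) W(t) \;\geq\; \kappa(0) W(0) \;\geq\; \delta^2,
\end{align*}
using the hypothesis $W(0) \geq 1$. On the other hand, distributing $W(t)$ term by term and using $\sum_t \kappa(t) J_0(rt) = f^\circ(r)$ for $r > 0$, together with (F2) for the $v_0$ term, expresses $\sum_t \kappa(t) W(t)$ as a linear combination of $\delta$, $f^\circ(1)$, $f^\circ(\sd)$, and sums of $f^\circ$-values over the graphs $M_i$, $T_i$, $G_1(\theta)$, $G_2(\theta)$.

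Next I apply the structural constraints, each multiplied by its nonnegative coefficient: constraint (G) applied to $M_i$ (with $\alpha(M_i) = 2$) gives $\sum_{x \in V(M_i)} f^\circ(|x|) \leq 2\delta$; (G) applied to $T_i$ (with $\alpha(T_i) = 1$) gives the $T_i$-bracket $\leq \delta$; and (CT) at angle $\theta$, once multiplied by the \emph{negative} weight $-w_\theta$, flips direction to give
\begin{align*}
-w_\theta \Bigl[ \sum_{\{x,y\} \in \binom{V(G_1(\theta))}{2}} f^\circ(|x-y|) - \sum_{x \in V(G_2(\theta))} f^\circ(|x|) \Bigr] \;\leq\; -w_\theta \bigl(5\delta - 1 - c_{CT} f^\circ(1)\bigr).
\end{align*}
Combining these upper bounds with the exact identities contributed by $v_0, v_1, v_{\sd}$ (via (F2), (A1), (A2)) and with the lower bound $\delta^2$ from the previous paragraph produces
\begin{align*}
\delta^2 \;\leq\; v_0 \delta + v_1 f^\circ(1) + v_{\sd} f^\circ(\sd) + 2\delta \sum_i w_{M_i} + \delta \sum_i w_{T_i} - 5\delta \sum_\theta w_\theta + \sum_\theta w_\theta + c_{CT} f^\circ(1) \sum_\theta w_\theta.
\end{align*}

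To finish, I invoke the hypotheses: $f^\circ(\sd) = s(\sd;A)\delta^2 \leq (1+\gamma)\delta^2$ splits the $v_{\sd}$ contribution as $v_{\sd}\delta^2 + v_{\sd}\gamma\delta^2$, while $f^\circ(1) = s(1;A)\delta^2 \leq \gamma\delta^2 \leq \gamma$ (since $\delta \leq 1$) handles the remaining $f^\circ(1)$ occurrences. Moving $-(1-v_{\sd})\delta^2$ to the right side and collecting all $\gamma$-dependent summands leaves an error bounded by $\gamma\bigl(v_1 + v_{\sd} + c_{CT}\sum_\theta w_\theta\bigr) \leq C\gamma \cdot (\text{sum of coefficients})$ for $C = \max(1, c_{CT})$, which is exactly the claimed inequality. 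The only real obstacle is bookkeeping: correctly tracking which constraints are equalities versus one-sided inequalities, ensuring (CT) is multiplied by $-w_\theta \leq 0$ so its direction reverses into an upper bound, and verifying the combinatorial values $\alpha(M_i) = 2$ and $\alpha(T_i) = 1$ implicit in the coefficients of the conclusion.
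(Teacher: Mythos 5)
Your proposal is correct and follows essentially the same route as the paper: pair the nonnegative spectral mass against the nonnegative witness $W$, bound $\sum_t \kappa(t)W(t)$ below by $\kappa(0)W(0)\geq \delta^2$ and above by distributing $W$ over the constraints (D), (F2), (A1), (A2), (G), (CT), and then absorb all $\gamma$-proportional contributions into $C\gamma[\text{sum of coefficients}]$ (the paper works with the normalized $\widetilde\kappa = \kappa/\delta$, which is an inessential difference).

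One small bookkeeping omission: for $M_i$ a unit-distance subgraph, constraint (G) gives $\sum_{x\in V(M_i)} f^\circ(|x|) \leq \alpha(M_i)\,\delta + |E(M_i)|\,f^\circ(1)$, not just $\leq 2\delta$ as you wrote (and similarly the $T_i$-bracket picks up an $O(f^\circ(1))$ correction). The paper carries these explicitly as the $11\gamma\delta$ and $3\gamma\delta$ terms in its chain of underbraced bounds. Your $\gamma$-collection at the end therefore omits $\gamma\bigl(11\sum_i w_{M_i} + 3\sum_i w_{T_i}\bigr)$. Since $f^\circ(1)\leq\gamma$ and these all land inside the $C\gamma[\text{sum of coefficients}]$ budget with a slightly larger $C$, the conclusion is unaffected, but the intermediate combined inequality you display is not literally a consequence of (G) as stated.
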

\begin{proof}
Let $A$ be $K$-periodic. Let
\begin{align*}
    \widetilde \kappa(t) = \frac{\kappa(t)}{\delta}\qquad t \in \left\{|\xi|\, :\, \xi \in \frac{2\pi}{K}\Z^2\right\}. 
\end{align*}
We have 
\begin{align*}
    \sum_{t\in \R} \widetilde \kappa(t) J_0(rt) &= s(r; A) \delta. 
\end{align*}
We have 
\begin{align*}
\delta &= \widetilde{\kappa(0)} \le \sum_{t \ge 0} \widetilde{\kappa}(t) W(t) \\
&\le \sum_{t \ge 0 } \widetilde{\kappa}(t) \Bigg[ v_0 + v_1 J_0(t) + v_{\sd} J_0(\sd t) + \sum_{i \in [3]} w_{M_i} \sum_{x \in V(M_i)} J_0(t |x|) \\
&\qquad + \sum_{i \in [10]} w_{T_i} \left( \sum_{x \in V(T_i)} J_0(t |x|) - \sum_{xy \in E(T_i)} J_0(t|x - y|) \right)  \\
&\qquad - \sum_{\theta} w_{\theta} \left( \sum_{(x, y) \in {V(G_1(\theta)) \choose 2}} J_0(t |x - y|) - \sum_{x\in V(G_2(\theta))}  J_0(t|x|)\right) \Bigg] \\
&=  v_0 \underbrace{\sum_{t \ge 0} \widetilde{\kappa}(t)}_{=1}  +  v_1  \underbrace{\sum_{t \ge 0 } \widetilde{\kappa}(t) J_0(t)}_{\leq \gamma\delta} + v_{\sd} \underbrace{\sum_{t \ge 0 } \widetilde{\kappa}(t) J_0(\sd t)}_{\leq (1+\gamma)\delta} +  \sum_{i \in [3]} w_{M_i}  \underbrace{\sum_{t \ge 0 } \widetilde{\kappa}(t) \sum_{x \in V(M_i)} J_0(t |x|)}_{\le 2+11\gamma\delta(A)} \\
&\qquad + \sum_{i \in [10]} w_{T_i} \underbrace{\sum_{t \ge 0 } \widetilde{\kappa}(t)  \left( \sum_{x \in V(T_i)} J_0(t |x|) - \sum_{xy \in E(T_i)} J_0(t|x - y|) \right)}_{\le 1+3\gamma\delta}  \\
&\qquad - \sum_{\theta} w_{\theta} \underbrace{\sum_{t \ge 0 } \widetilde{\kappa}(t) \left( \sum_{(x, y) \in {V(G_1(\theta)) \choose 2}} J_0(t |x - y|) - \sum_{x\in V(G_2(\theta))}  J_0(t|x|)\right)}_{\ge 5 - 1/\delta - c_{CT} \gamma\delta } \Bigg] \\
&\leq v_0 + v_1\gamma \delta   + v_{\sd} (1+\gamma)\delta + (2+11\gamma\delta) \sum_{i \in [3]} w_{M_i} + (1+3\gamma \delta)\sum_{i \in [10]} w_{T_i} - \sum_{\theta} w_{\theta} (5 - 1/\delta - c_{CT} \gamma \delta)  
\end{align*}
We rearrange to get (using $\gamma \delta \leq \gamma$)
\begin{align*}
    \delta^2 &\leq v_0\delta + v_{\sd} \delta^2 + 2\delta \sum_{i\in [3]} w_{M_i} + \delta \sum_{i\in [10]} w_{T_i} - \sum_{\theta} w_{\theta}(5\delta-1) +\\ 
    &\qquad +\gamma \left[v_1 + v_{\sd} + 11\sum_{i\in [3]} w_{M_i} + 3\sum_{i\in [10]} w_{T_i} + c_{CT} \sum_{\theta} w_{\theta}\right] \\ 
    0 &\leq -(1-v_{\sd})\delta^2 + \delta\left[v_0 + 2\sum_{i\in [3]} w_{M_i} + \sum_{i\in [10]} w_{T_i} - 5\sum_{\theta} w_{\theta}\right] + \sum_{\theta} w_{\theta} + C\gamma[\text{sum of coefficients}].
\end{align*}
\end{proof}

\begin{proof}[Proof of Theorem \ref{t:main}]
We use a linear program to find an explicit choice of coefficients such that $W(0) \geq 1$, $W(t) \geq 0$ for $t\geq 0$, and the maximum root of the quadratic equation 
\begin{align*}
    -(1-v_{\sd})\delta^2 + \delta[v_0 + 2\sum_{i\in [3]} w_{M_i} + \sum_{i\in [10]} w_{T_i} - 5\sum_{\theta} w_{\theta}] + \sum_{\theta} w_{\theta} = 0 
\end{align*}
is some $\delta^* \leq 0.229$. Fix some $\varepsilon > 0$. If $\gamma$ is chosen small enough in terms of $\varepsilon$ and the coefficients, we have that for all $\delta > 0.229+\varepsilon$,
\begin{align*}
    -(1-v_{\sd})\delta^2 + \delta[v_0 + 2\sum_{i\in [3]} w_{M_i} + \sum_{i\in [10]} w_{T_i} - 5\sum_{\theta} w_{\theta}] + \sum_{\theta} w_{\theta} + C\gamma [\text{sum of coefficients}] < 0
\end{align*}
Thus there is some $\gamma > 0$ so that as long as $s(1; A) \leq \gamma$ and $s(\sd; A) \leq 1+\gamma$, we have 
\begin{align*}
    \delta(A) \leq \delta(\text{Croft tortoise}) -\gamma \leq m_1(\R^2) - \gamma.
\end{align*}
\end{proof}

\subsection{Remarks on the numerical solution}
To prove Theorem \ref{t:main} we solve the below feasibility problem, where $\delta^+$ is a fixed upper bound on the maximum density, and thus renders the following optimization problem a linear program\footnote{Example code to solve this feasibility program can be found at \url{https://colab.research.google.com/drive/1Y6gxdoKaDah22Dk4VOa0B1G223peNxul}.}:

\begin{align*}
\text{find}\qquad & v_0, v_{\sd}, w_{M_i}, w_{T_j}, w_{\theta} \\
\text{such that  }    &(\delta^+)^2 (1-v_{\sd}) \geq \delta^+\Bigl(v_0 + 2 \sum_{i\in [3]} w_{M_i} + \sum_{i\in [10]} w_{T_i} - 5\sum_{\theta} w_{\theta}\Bigr) + \sum_{\theta} w_{\theta} \\
&v_0, v_{\sd}, w_{M_i}, w_{T_j}, w_{\theta} \ge 0, \quad \text{for all } i \in [3], j \in [10] \\
& W(0 ) \ge 1 \\
& W(t) \ge 0 \quad \text{for all $t > 0$} 
\end{align*}
Numerically we first find a choice of coefficients where $W(t) > 0$ for all $t$ in a dense grid, and then verify after the fact that $W(t) > 0$ by evaluating $W(t)$ at a much higher resolution than its variations (this is consistent with prior works in this area). More precisely, we observe that $|J_0'(t)| = |J_1(t)| < 0.6$, and that for all coefficients $r$ appearing as $J_0(rt)$ in contributions to $W(t)$, $r < 4$. Noting that we can find a feasible LP solution with the additional constraint that the sum of the coefficients satisfies
$$v_0 + v_1 + v_{\sd} + \sum_i w_{M_i} + 2\sum_j w_{T_i} + 2\sum_{\theta} w_{\theta} \le 15,$$ we see that for any such choice of coefficients
$$|W'(t)| < 4 \cdot \left(v_0 + v_1 + v_{\sd} + \sum_i w_{M_i} + 2\sum_j w_{T_i} + 2\sum_{\theta} w_{\theta} \right) \cdot \max_{t} |J_0'(t)| \le 4\cdot 15 \cdot 0.6 = 36.$$
Then, given some choice of coefficients $v_0, v_{\sd}, w_{M_i}, w_{T_j}, w_{\theta}$ obtained by solving the above feasibility program, we can verify that they define a nonnegative witness function $W(t)$ by checking that $W(t) \ge 0.003$ for all $t = 0.00001 j$, $j\in \Z_{\geq 0}$ and $t \in [0, 20]$, and noting that for $t > 20$ we have $W(t) > 0.1$.

See  ~\cref{f:nearby-lp-autocorr} for a plot of the empirical autocorrelation function corresponding to the primal solution. A plot of the witness function $W(t)$ we constructed along with the associated coefficients can be found below in~\cref{fig:witness}. 

\begin{figure}
    \centering
    \includegraphics[width=0.5\linewidth]{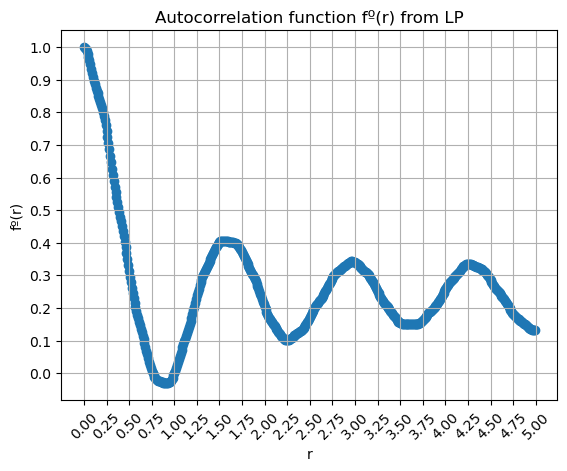}
    \caption{Example autocorrelation function $f^{\circ}(r)$ obtained by solving an approximate primal LP with $\delta^+$ chosen near to the optimal density of the LP (for a true upper bound, the LP is infeasible)}
    \label{f:nearby-lp-autocorr}
\end{figure}

\begin{figure}
    \centering
\includegraphics[width=0.5\linewidth]{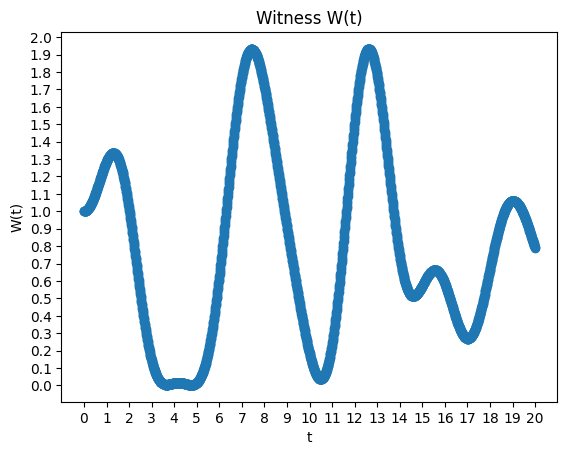}
\begin{tabular}{|c|c|}
\hline 
$v_0$&0.0244 \\
$v_1$ &9.0158\\
$v_{\sd}$ &1.9724 \\
$w_{M_i}$ & $ \left( 0.000949, 0.00394, 0.01952\right)$ \\
$w_{T_i}$&$\big(0.00937, 0.00199, 0.00220, 0.00164, 0.00149, 0.0479,
 0.0925,  0.00203, 0.00231, 0.00316) $\\
$w_{\theta} $ &$\left(0.00140, 0.00202, 0.00438, 0.0898,  0.630\right)$ \\
\hline 
\end{tabular}
    \caption{A plot of the witness function $W(t)$ along with the coefficients (as defined in~\cref{eq:witness}) chosen to certify an upper bound on the density of a $1$-avoiding set with few distance $\sd$ pairs.}
    \label{fig:witness}
\end{figure}

\section{More structural observations}\label{s:more_obs}

In this paper we discuss long range correlations in unit distance avoiding sets. If a set avoids unit distances, it tends to have more than the expected number of distance two pairs. This analysis was motivated by the observation that the known examples of dense unit distance avoiding sets are unions of clusters.
It seems important yet  difficult to prove that maximum unit distance avoiding sets behave in this way, and we'd like to discuss some known results and open questions about this conjectural picture. 

Keleti, Matolcsi, Filho, and Ruzsa~\cite{KMOFR16} defined the following notion. A set $A \subset \R^2$ has \emph{block structure} if 
\begin{equation*}
    A = \bigcup_{j=1}^{\infty} A_j,
\end{equation*}
where $|x-y| < 1$ for $x,y$ in the same block $A_j$, and $|x-y| > 1$ for $x,y$ in different blocks. They gave a nice proof that if $A$ is unit distance avoiding and has block structure, $\delta(A) < 1/4$. First they use the Brunn-Minkowski inequality and the isodiametric inequality to prove that $\delta(A) \leq 1/4$. They then use stability theorems to conclude $A$ has a very special structure if $\delta(A) \geq 1/4 - \varepsilon$, and rule out that case. 

\begin{qn}
Does there exist a set $A$ with block structure and $\delta(A) = m_1$?
\end{qn}

In order to get at this question we ran a computer experiment. We used an optimization package (integer linear programming in Gurobi) to find a large unit distance avoiding set on a $50\times 50$ grid. The output has block structure, even though the optimization package only knows about unit distances in this graph and not nearest neighbor distances. See Figure \ref{fig:opt_sol_50x50}. It would be interesting to run a similar experiment on larger grid sizes. 

\begin{figure}
    \centering
    \includegraphics[width=0.5\linewidth]{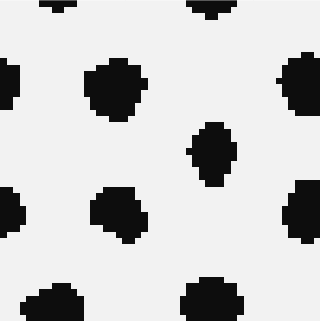}
    \caption{An optimization program finding a large unit-distance avoiding set}
    \label{fig:opt_sol_50x50}
\end{figure}

A positive answer to this question would give very strong structural information about unit distance avoiding sets. A linear programming analysis of the pair correlation function should give at least partial information in this direction. The fact that $A$ comes in clusters is reflected in $f^{\circ}(r)$ being large for small values of $r$, and one could potentially establish this rigorously and quantitatively. 

In another direction, it is interesting to consider a greedy random model for independent sets rather than a uniform random model. Given a graph $G$, one can sample an independent set by adding one vertex at a time, each one uniformly at random, until one obtains a maximal independent set. For discrete unit distance graphs this process tends to produce very sparse unit distance avoiding sets, see Table~\ref{tab:greedy_unit_distance_avoiding_sets} for an example. Nevertheless, they still have some amount of clustering, and some excess large scale correlations---see Figure \ref{fig:pair_corr_greedy} for the pair correlation function of a greedy unit distance avoiding set. It would be interesting to establish these observations rigorously.
\begin{table}[h]
\centering 
\begin{tabular}{ccc} 
\includegraphics[width=0.3\linewidth]{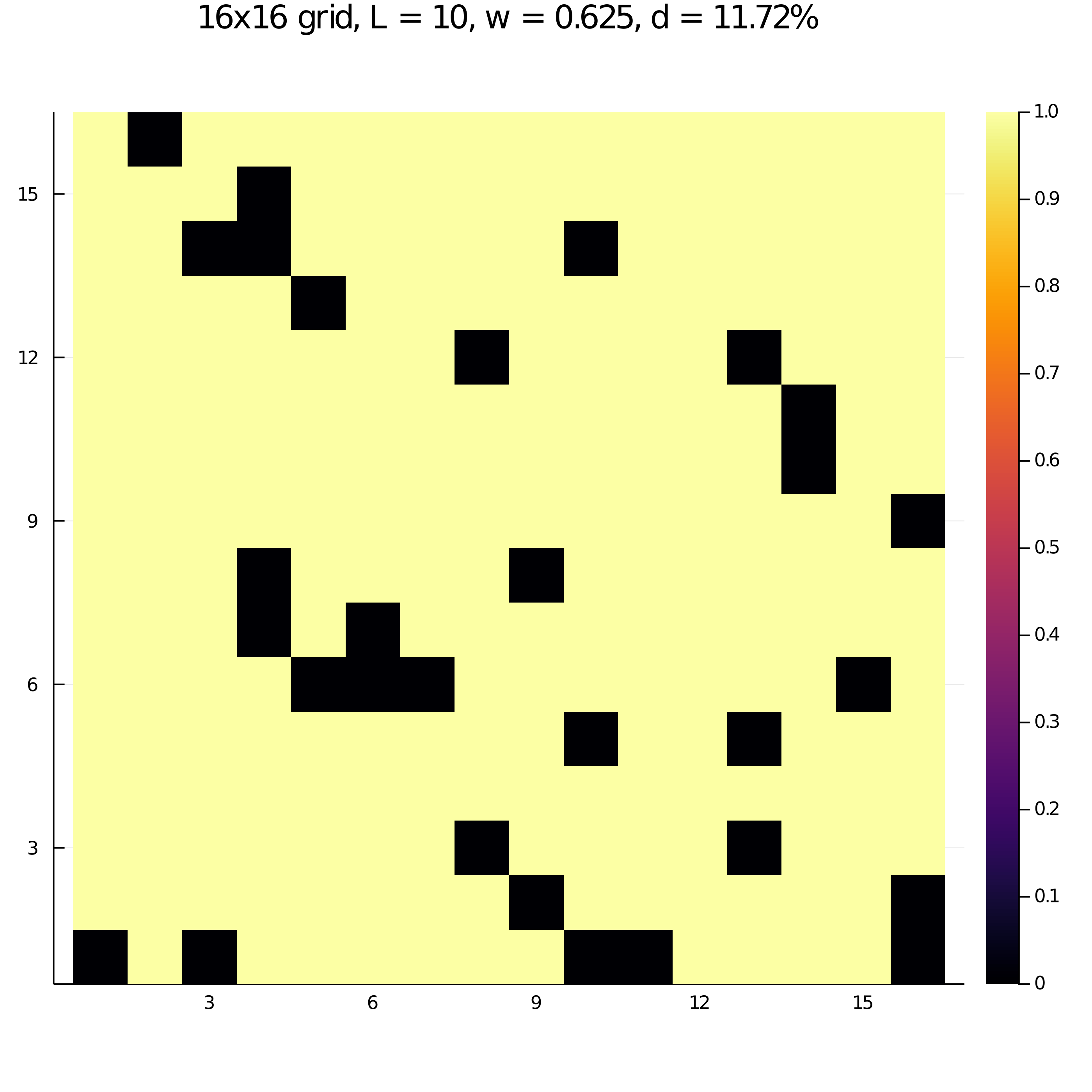} &
\includegraphics[width=0.3\linewidth]{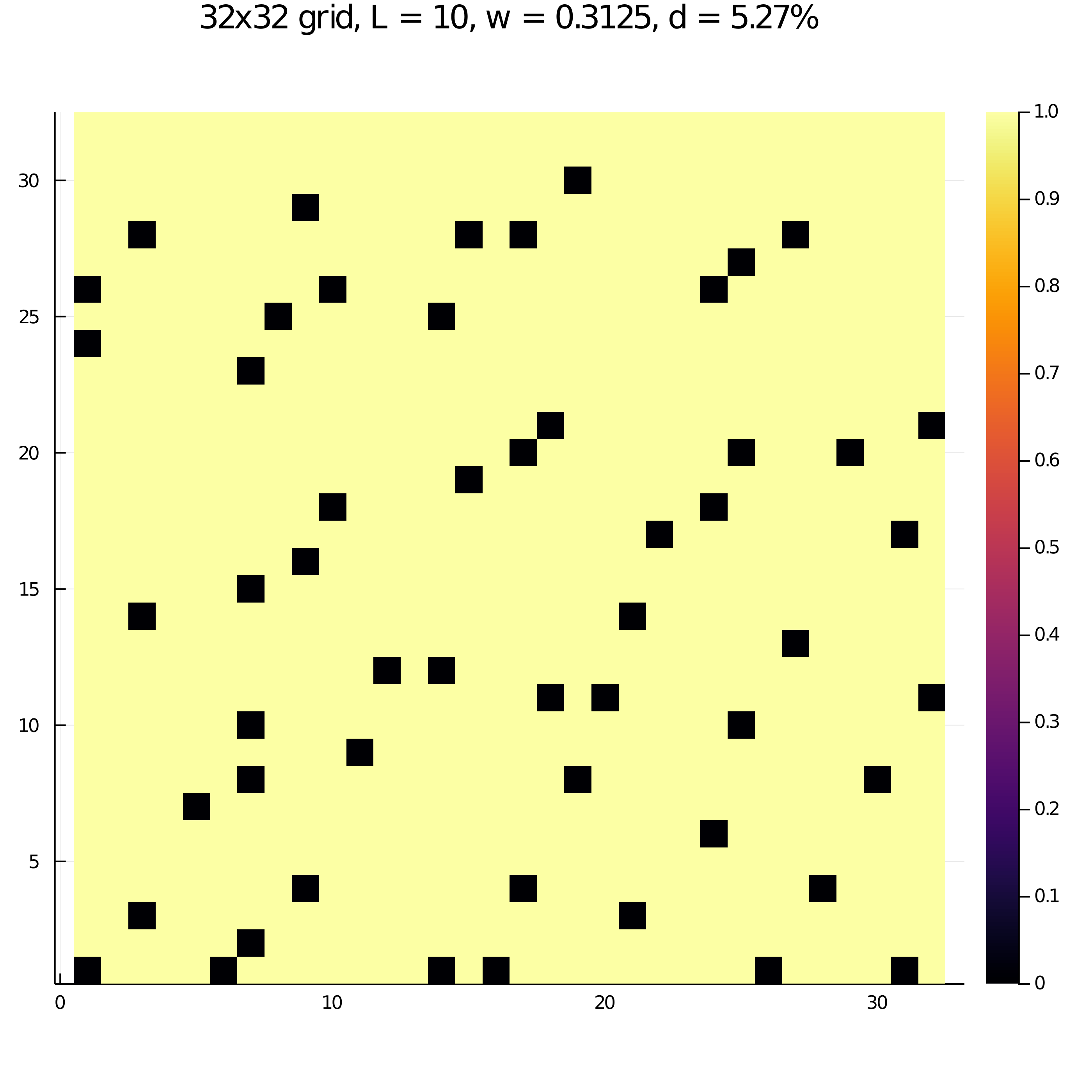} &
\includegraphics[width=0.3\linewidth]{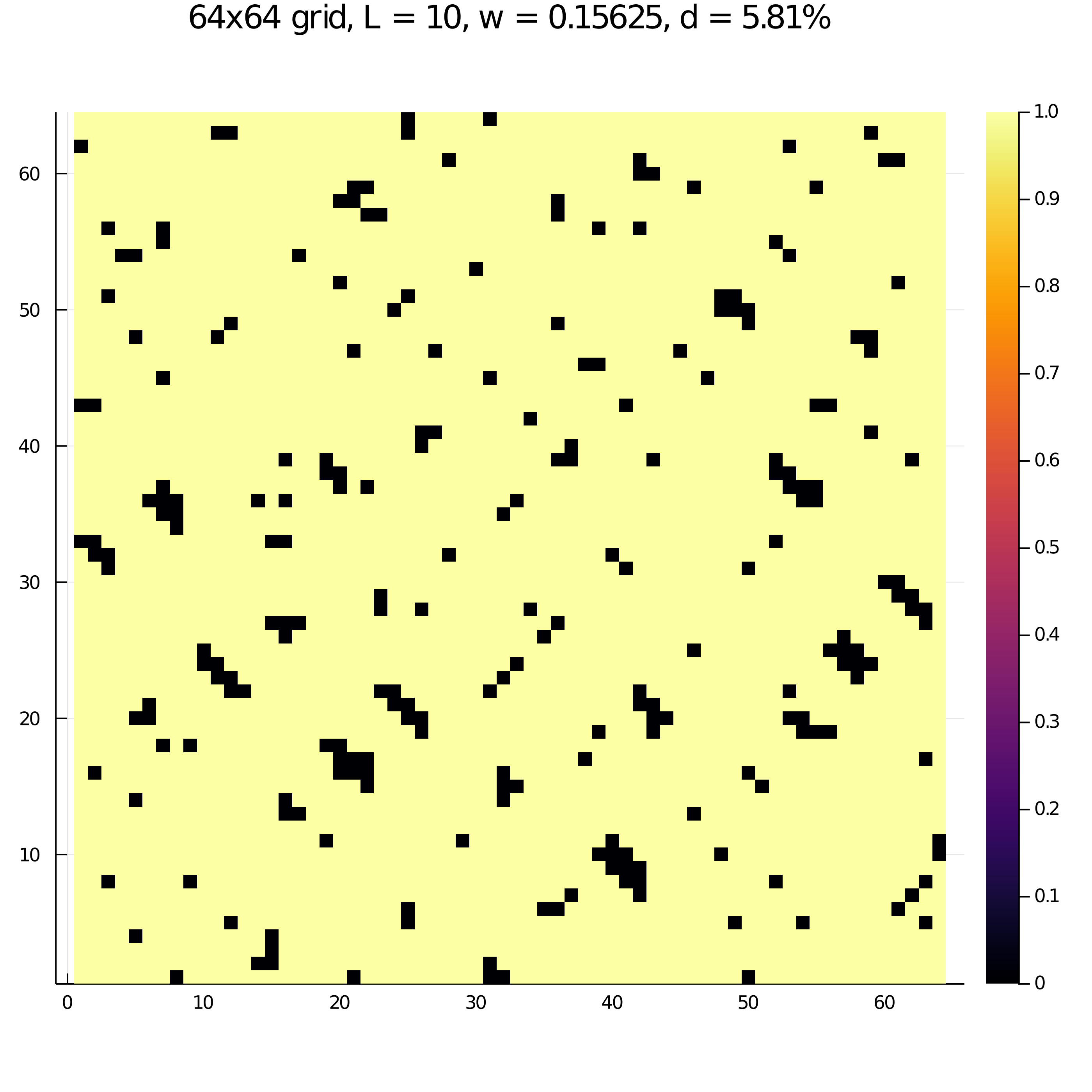} \\
\includegraphics[width=0.3\linewidth]{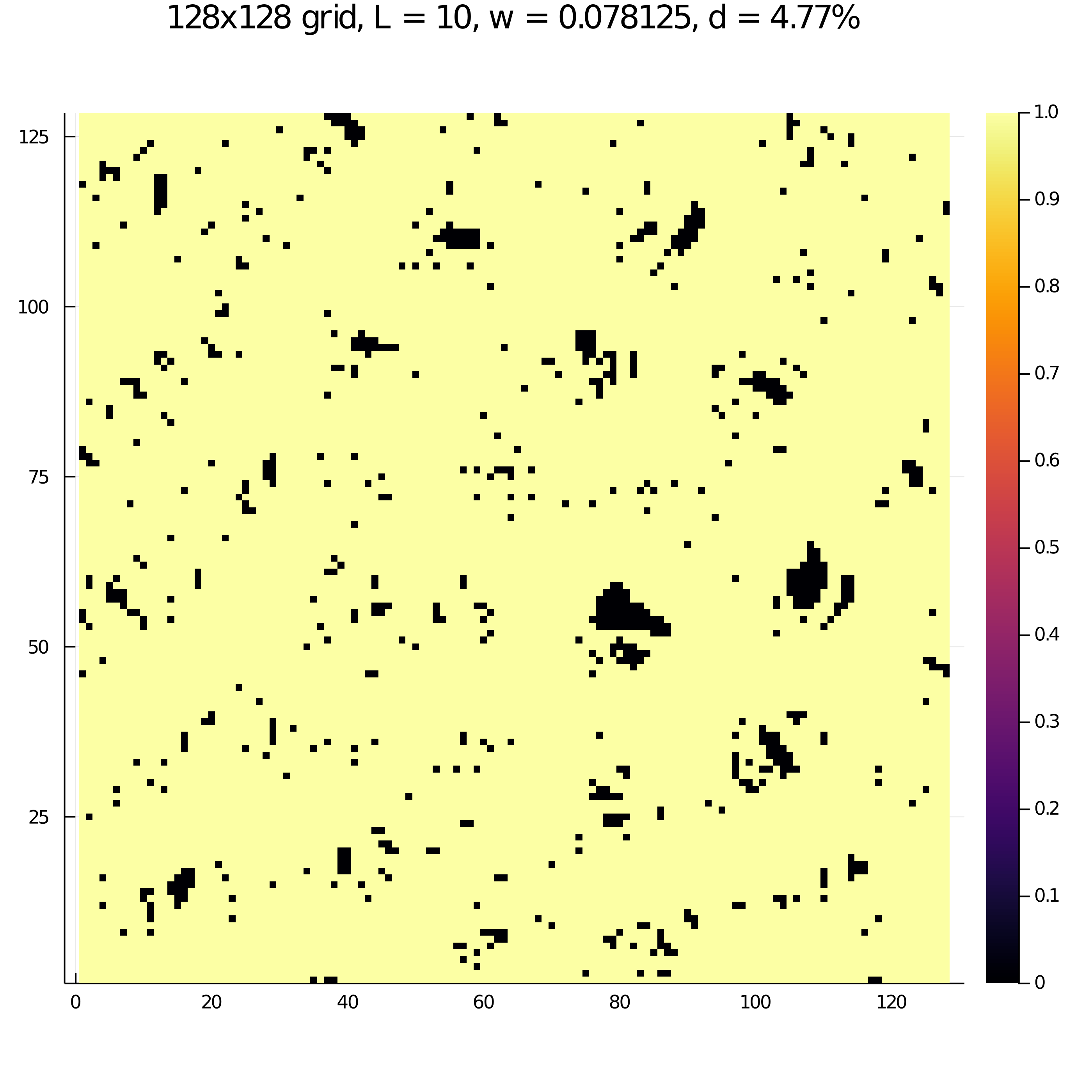} &
\includegraphics[width=0.3\linewidth]{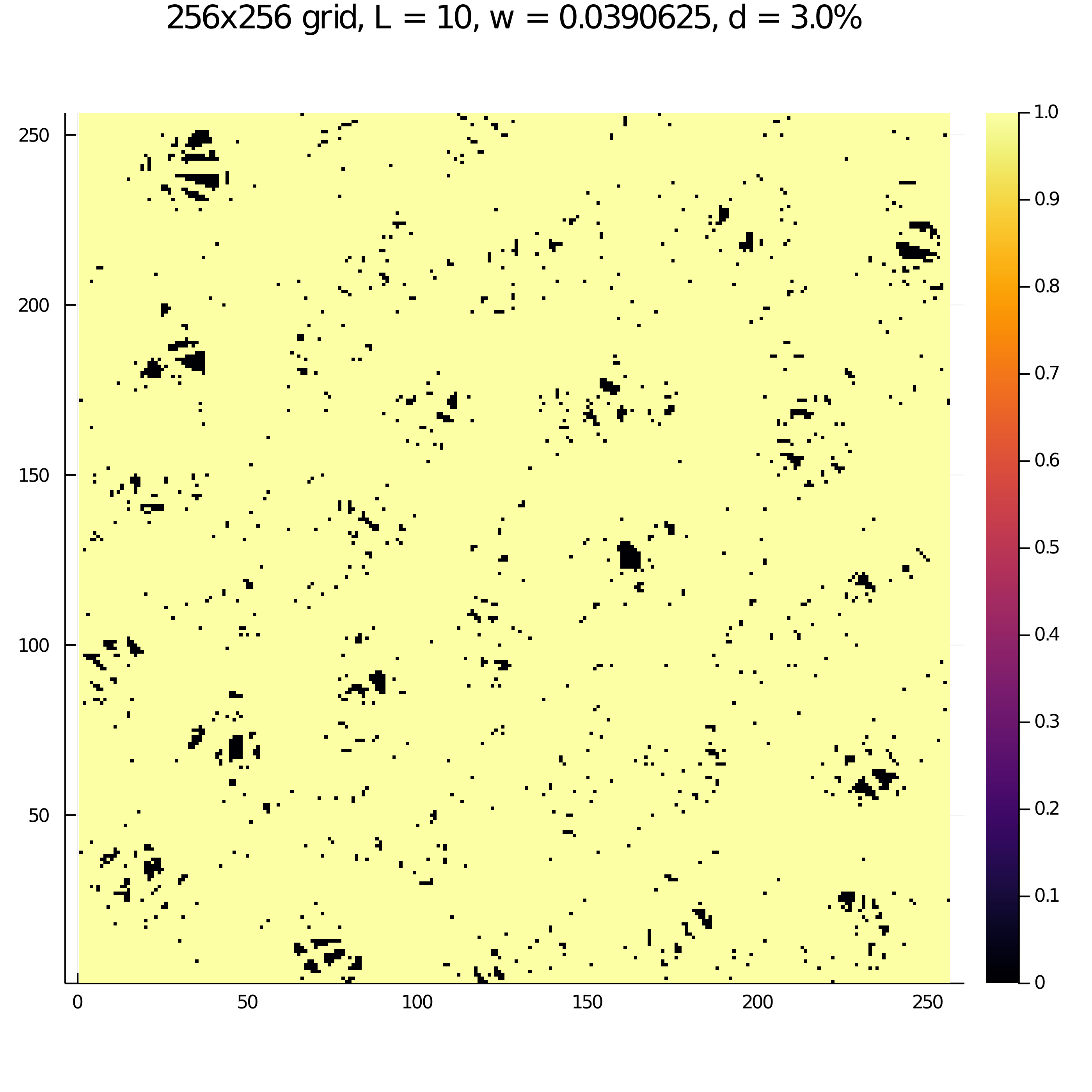} &
\includegraphics[width=0.3\linewidth]{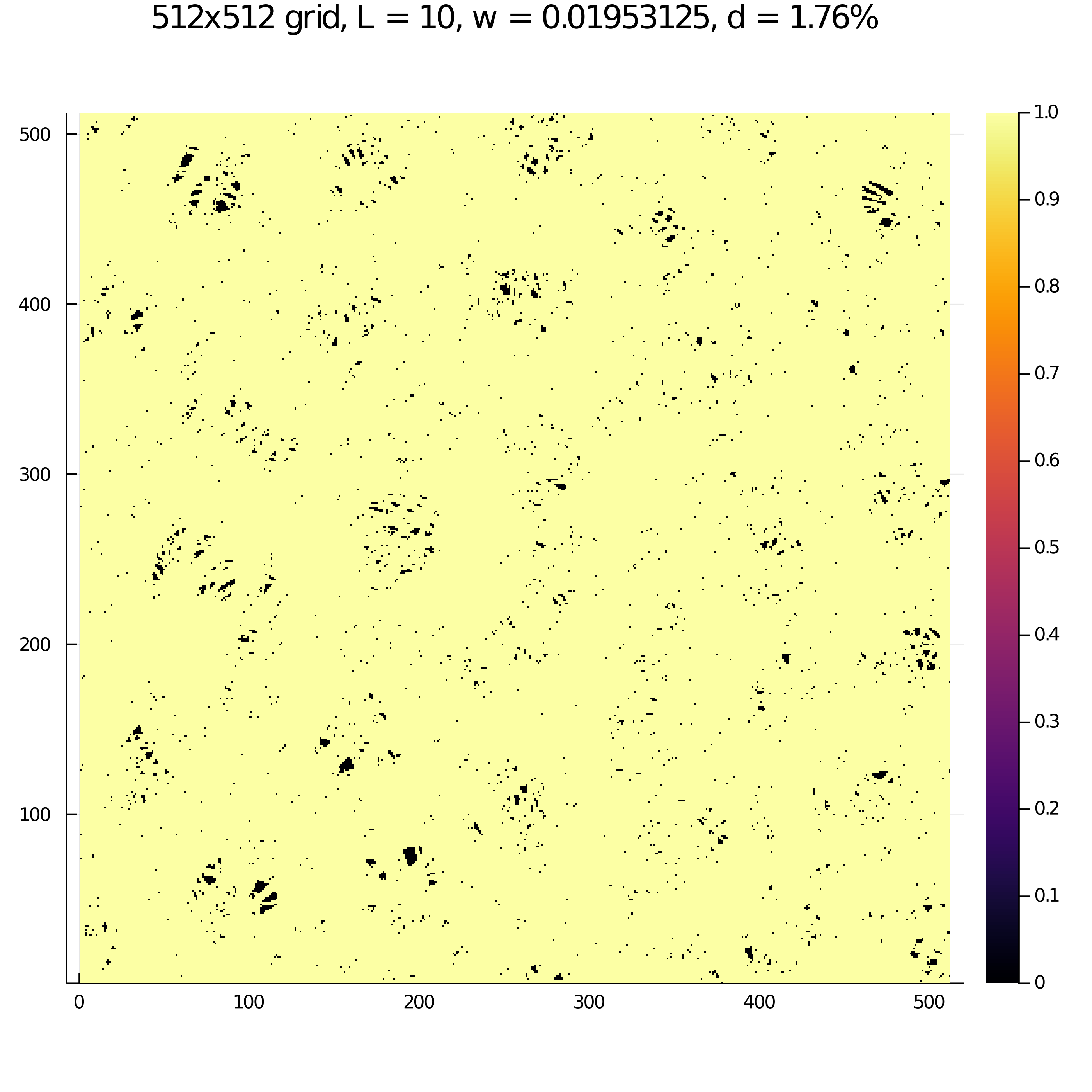} \\
\includegraphics[width=0.3\linewidth]{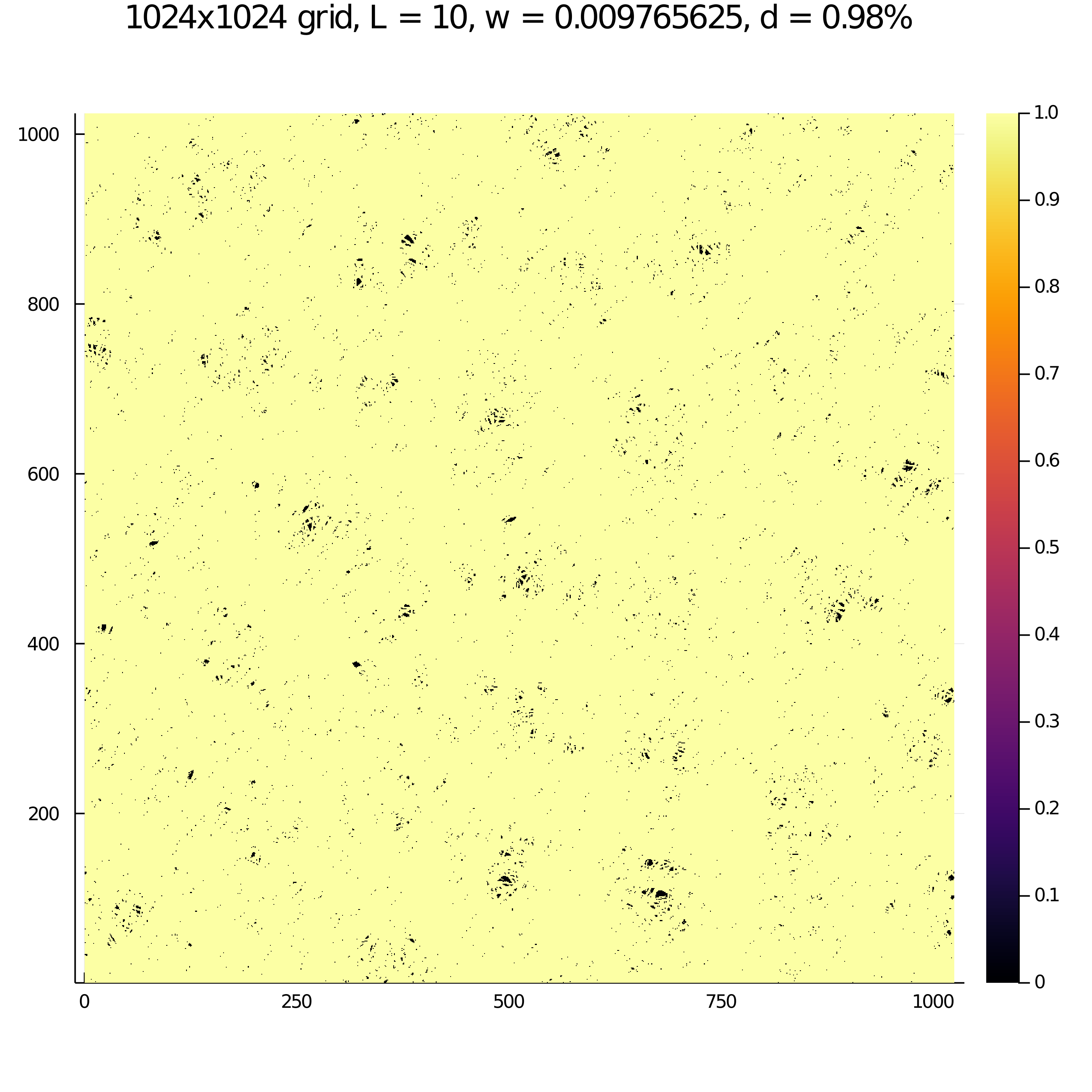} &
\includegraphics[width=0.3\linewidth]{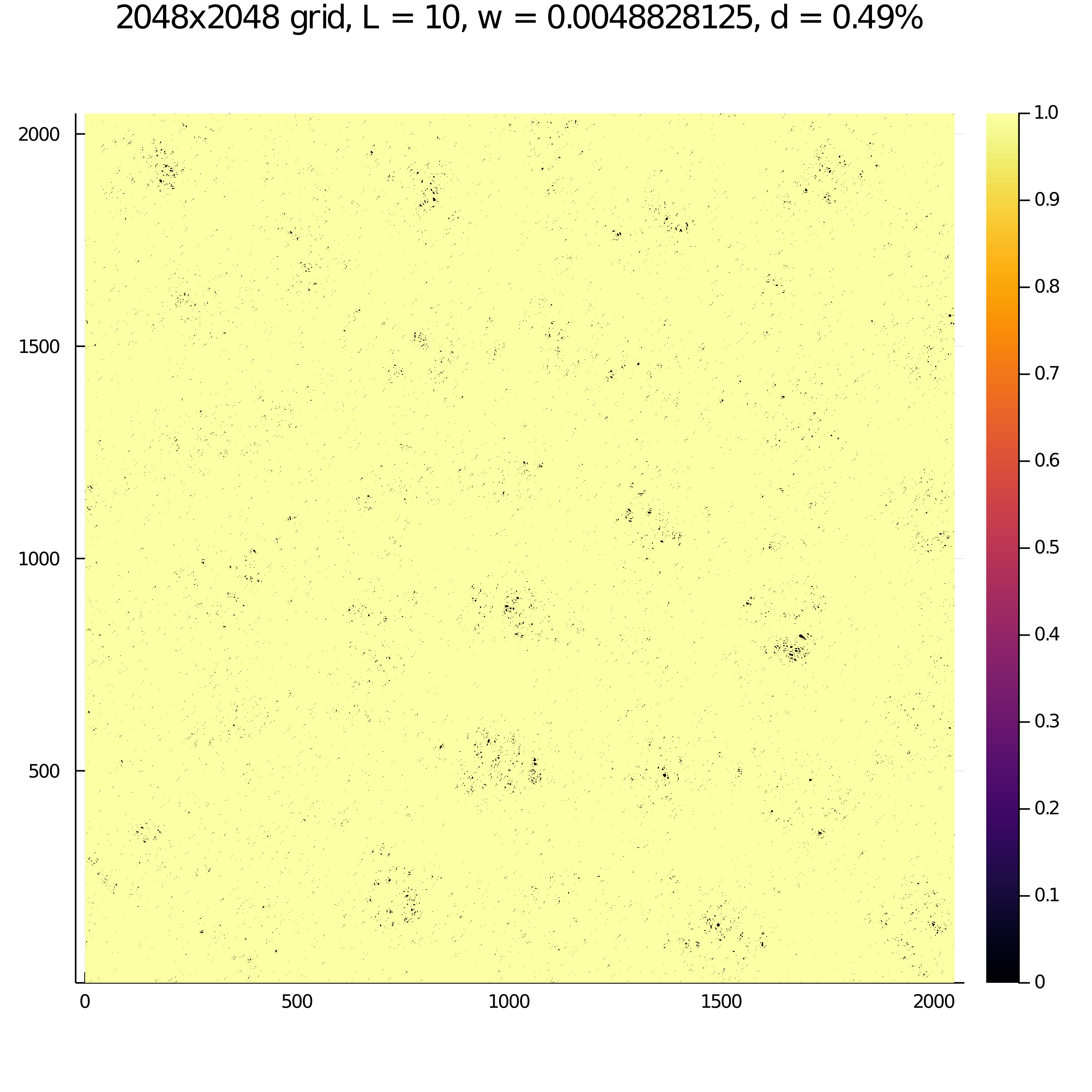} &
\includegraphics[width=0.3\linewidth]{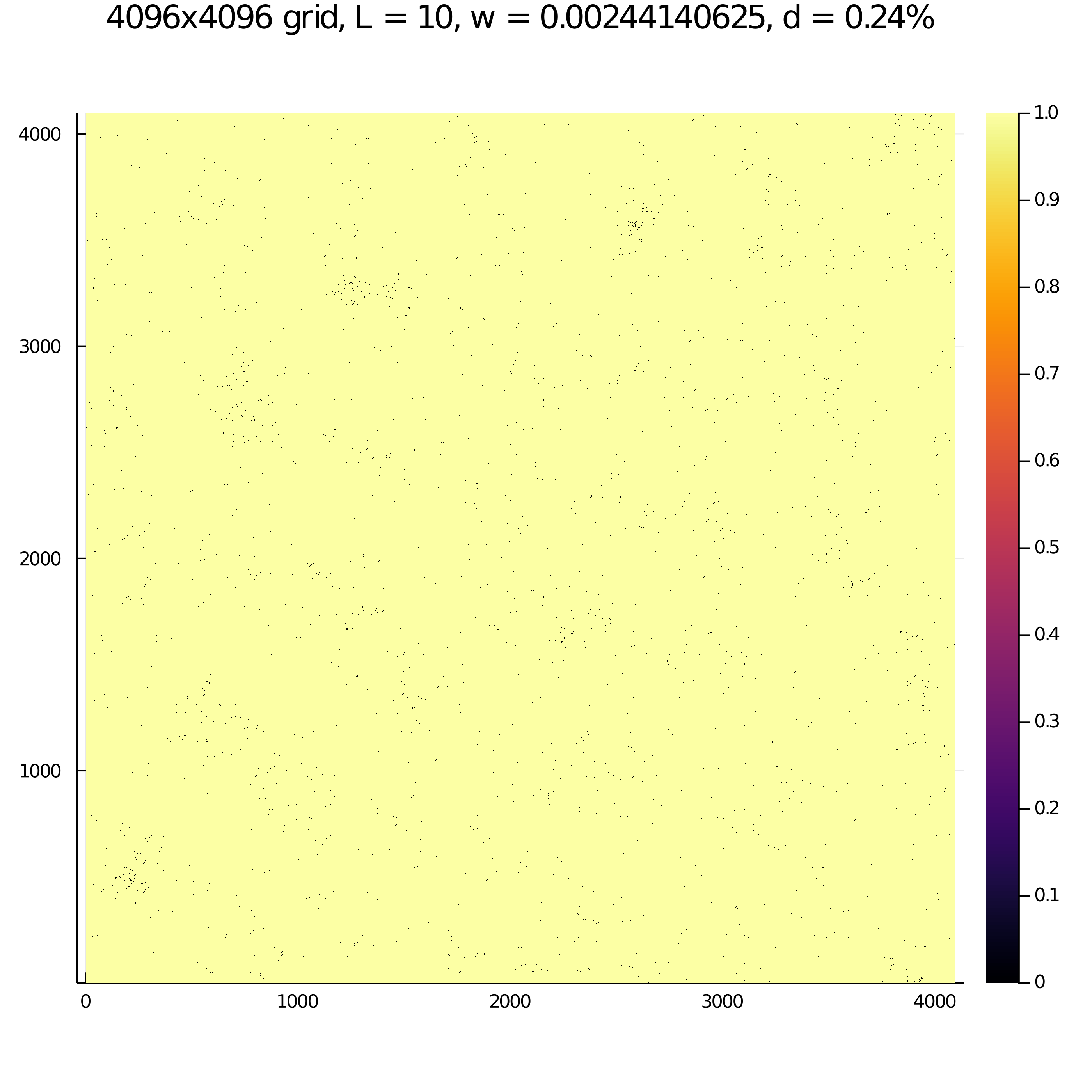} \\
\end{tabular}
\caption{Greedily selected unit distance avoiding sets with varying levels of discretization.} 
\label{tab:greedy_unit_distance_avoiding_sets} 
\end{table}

\begin{figure}[h]
    \centering
    \includegraphics[width=0.75\linewidth]{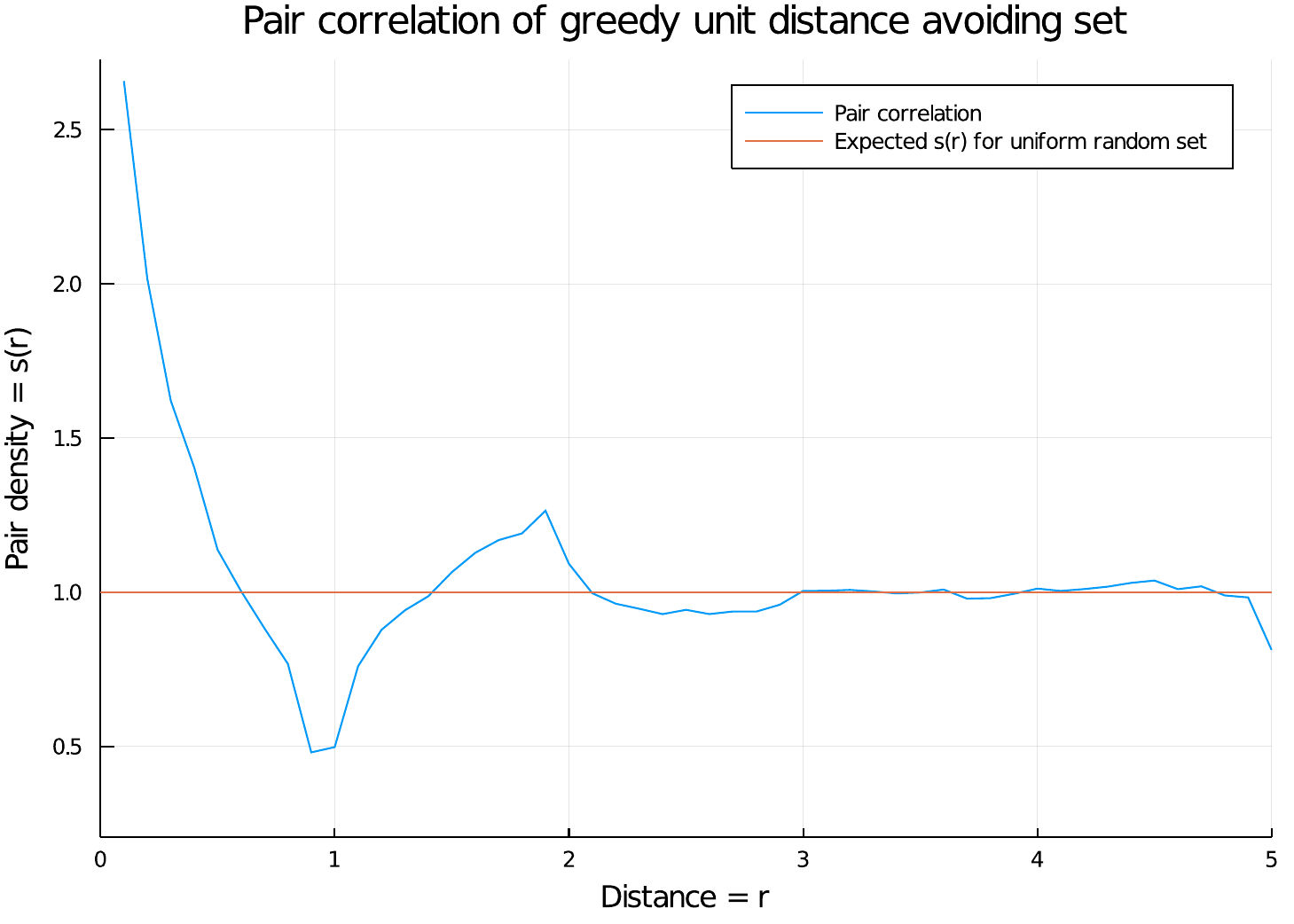}
    \caption{Pair correlation function for greedy unit distance avoiding set with $N = 100$, $K = 10$}
    \label{fig:pair_corr_greedy}
\end{figure}

\clearpage
\bibliographystyle{alpha}
\bibliography{ksat.bib}

\newcommand{\etalchar}[1]{$^{#1}$}
\begin{thebibliography}{KMdOFR16}

\bibitem[ACM{\etalchar{+}}23]{ACMVZ22}
Gergely Ambrus, Adri{\'a}n Csisz{\'a}rik, M{\'a}t{\'e} Matolcsi, D{\'a}niel
  Varga, and P{\'a}l Zs{\'a}mboki.
\newblock The density of planar sets avoiding unit distances.
\newblock {\em Mathematical Programming}, pages 1--25, 2023.

\bibitem[AM22]{AM22}
Gergely Ambrus and M\'{a}t\'{e} Matolcsi.
\newblock Density estimates of 1-avoiding sets via higher order correlations.
\newblock {\em Discrete Comput. Geom.}, 67(4):1245--1256, 2022.

\bibitem[AS16]{AS4}
Noga Alon and Joel~H Spencer.
\newblock {\em The probabilistic method 4th ed}.
\newblock John Wiley \& Sons, 2016.

\bibitem[BMS18]{BMS18}
J\'{o}zsef Balogh, Robert Morris, and Wojciech Samotij.
\newblock The method of hypergraph containers.
\newblock In {\em Proceedings of the {I}nternational {C}ongress of
  {M}athematicians---{R}io de {J}aneiro 2018. {V}ol. {IV}. {I}nvited lectures},
  pages 3059--3092. World Sci. Publ., Hackensack, NJ, 2018.

\bibitem[Bou86]{BOU86}
J.~Bourgain.
\newblock A {S}zemer\'{e}di type theorem for sets of positive density in {${\bf
  R}^k$}.
\newblock {\em Israel J. Math.}, 54(3):307--316, 1986.

\bibitem[Buk08]{BUKH08}
Boris Bukh.
\newblock Measurable sets with excluded distances.
\newblock {\em Geom. Funct. Anal.}, 18(3):668--697, 2008.

\bibitem[Cro67]{CRO67}
HT~Croft.
\newblock Incidence incidents.
\newblock {\em Eureka}, 30:22--26, 1967.

\bibitem[dOFV10]{FV10}
Fernando~M\'{a}rio de~Oliveira~Filho and Frank Vallentin.
\newblock Fourier analysis, linear programming, and densities of distance
  avoiding sets in {$\Bbb R^n$}.
\newblock {\em J. Eur. Math. Soc. (JEMS)}, 12(6):1417--1428, 2010.

\bibitem[Erd85]{ER85}
Paul Erd{\"o}s.
\newblock Problems and results in combinatorial geometry a.
\newblock {\em Annals of the New York Academy of Sciences}, 440(1):1--11, 1985.

\bibitem[KMdOFR16]{KMOFR16}
Tam\'{a}s Keleti, M\'{a}t\'{e} Matolcsi, Fernando~M\'{a}rio de~Oliveira~Filho,
  and Imre~Z. Ruzsa.
\newblock Better bounds for planar sets avoiding unit distances.
\newblock {\em Discrete Comput. Geom.}, 55(3):642--661, 2016.

\bibitem[KW82]{KW80}
Daniel~J. Kleitman and Kenneth~J. Winston.
\newblock On the number of graphs without {$4$}-cycles.
\newblock {\em Discrete Math.}, 41(2):167--172, 1982.

\bibitem[Sam15]{SAM15}
Wojciech Samotij.
\newblock Counting independent sets in graphs.
\newblock {\em European J. Combin.}, 48:5--18, 2015.

\bibitem[Sap03]{SAP03}
A.~A. Sapozhenko.
\newblock The {C}ameron-{E}rd{o}s conjecture.
\newblock {\em Dokl. Akad. Nauk}, 393(6):749--752, 2003.

\bibitem[ST15]{ST15}
David Saxton and Andrew Thomason.
\newblock Hypergraph containers.
\newblock {\em Invent. Math.}, 201(3):925--992, 2015.

\bibitem[Sz{e}84]{SZE84}
L.~A. Sz{e}kely.
\newblock Measurable chromatic number of geometric graphs and sets without some
  distances in {E}uclidean space.
\newblock {\em Combinatorica}, 4(2-3):213--218, 1984.

\end{thebibliography}

\end{document}